\documentclass[11pt,a4paper]{amsart}
\usepackage{amsmath,amsfonts,xargs,amssymb}

\usepackage[utf8]{inputenc}
\usepackage{graphicx} 
\usepackage{amsthm}
\usepackage{tikz}
\usetikzlibrary{calc}
\usepackage{tkz-tab}
\usepackage{theoremref}
\usepackage{thmtools, thm-restate}
\declaretheorem{theorem}
\usepackage{hyperref}
\usepackage{comment}
\usepackage{color}
\definecolor{RED}{rgb}{1,0,0} 
\usepackage{mathtools}
\usepackage{subcaption}
\mathtoolsset{showonlyrefs}
\hypersetup{colorlinks=true,urlcolor=black, pdftitle={Volume and Projection Inequalities II: Determinants and Lp-Sums}}

\usepackage[T1]{fontenc}
\usepackage{lmodern}
\usepackage{microtype}
\usepackage{booktabs, array,longtable}
\usepackage{enumitem}

\theoremstyle{plain}

\newtheorem{lemma}[theorem]{Lemma}

\newtheorem{prop}[theorem]{Proposition}

\newtheorem{conj}{Conjecture}
\newtheorem{ques}[conj]{Question}

\theoremstyle{definition}

\newcommand{\MainTheoremExtra}{.}

\newcommand{\R}{\mathbb R}
\newcommand{\s}{\mathbb S}

\title[Volume and Projection Inequalities II]{Volume and Projection Inequalities II:\\ Determinants and $L_p$-Sums}
\author[M. Fradelizi, A. Manui, C. S. Ndiaye, and A. Zvavitch]{Matthieu Fradelizi,  Auttawich Manui,
Cheikh Saliou Ndiaye, and Artem Zvavitch}

\subjclass[2020]{Primary: 52A40; Secondary: 52A21, 52A39}
\keywords{zonoids, zonotopes, $L_p$-sums, projection inequalities,
determinant inequalities}
\date{}

\begin{document}

\begin{abstract}
We study  inequalities  for the volume of orthogonal projections and their relation to  Firey 
$L_p$-sum, together with their determinant-power analogues, motivated
by the Dembo--Cover--Thomas conjecture. For $L_p$-zonoids $K,L\subset\mathbb{R}^n$ and $u\in S^{n-1}$,
we consider the inequality
\[
\left(
\frac{|K\oplus_p L|}
     {|P_{u^\perp}(K\oplus_p L)|}
\right)^p
\geq
\left(
\frac{|K|}{|P_{u^\perp}K|}
\right)^p
+
\left(
\frac{|L|}{|P_{u^\perp}L|}
\right)^p .
\]
For every $1<p<2$, we prove that this inequality fails in every
dimension $n\geq2$. In contrast, the weak one-term inequality,
obtained by omitting the second term on the right-hand side, holds in
dimension two throughout the full range $1\leq p\leq2$. The proof of
this planar result uses a sharp estimate for the normalized
duality map.

We also classify the corresponding determinant-power inequalities in
the range $0<p<2$. The strong two-term inequality holds in dimension
two and fails in every dimension $n\geq3$. The weak one-term
inequality holds for $0<p\leq1$ in dimensions $n\leq3$ and fails
for $n\geq4$; for $1<p<2$, it holds only in dimension two.
\end{abstract}

\maketitle
\tableofcontents

\section{Introduction}

The present paper continues a line of research on inequalities for the volume of convex bodies and their orthogonal projections. Those inequalities are largely 
 inspired by connections with additive combinatorics and
information theory. Among their main motivations is a conjecture of
Dembo, Cover, and Thomas \cite{DCT-91}. They asked whether, for convex
bodies \(A,B\subset\mathbb{R}^n\),
\begin{equation}\label{conj:DCT}
\frac{|A+B|}{|\partial(A+B)|_{n-1}}
\geq
\frac{|A|}{|\partial A|_{n-1}}
+
\frac{|B|}{|\partial B|_{n-1}}.
\end{equation}
Here, \(|\cdot|\) denotes \(n\)-dimensional volume and
\(|\partial K|_{n-1}\) denotes the surface area of \(K\).

Recall that a zonotope is a finite Minkowski sum of symmetric
segments, and a zonoid is a Hausdorff limit of zonotopes. Within the
class of zonoids, the validity of the inequality above is equivalent
to
\begin{equation}\label{conj:str-zonoid}
\frac{|A+B|}{|P_{u^\perp}(A+B)|}
\geq
\frac{|A|}{|P_{u^\perp}A|}
+
\frac{|B|}{|P_{u^\perp}B|}
\end{equation}
for every \(u\in S^{n-1}\), where \(P_{u^\perp}A\) denotes the
orthogonal projection of \(A\) onto \(u^\perp\), see \cite[Remark~3.10]{FMMZ-24}.

The strong projection inequality above, together with its weak
one-term version, is studied in our companion paper
\cite{FHMNWZ-26-I}. The present paper considers two natural extensions:
one based on Firey's \(L_p\)-addition and the other on sums of powers
of determinants.
\subsection{Inequalities for volume.}
The first extension replaces Minkowski addition by Firey
\(L_p\)-addition, introduced by Firey \cite{F-62} and subsequently
developed systematically by Lutwak into what is now the rich and
far-reaching \(L_p\) Brunn--Minkowski theory
\cite{L-93,L-96}. Let
$p\geq1$ 
and $K,L\subset\R^n$ be convex bodies containing the origin, the
$L_p$-sum of $K$ and $L$ is the convex body $K\oplus_p L$ defined by
\[
 h_{K\oplus_p L}^p(x)=h_K^p(x)+h_L^p(x),
 \qquad x\in\R^n,
\]
where $h_K(u)=\max_{x\in K}\langle x, u \rangle$ denotes the support function of $K$. 

An $L_p$-zonotope is a finite $L_p$-sum of symmetric segments. Hausdorff limits of $L_p$-zonotopes are called $L_p$-zonoids.  When $p=1$, these are zonoids, whereas for $p=2$, they are ellipsoids.

The strong $L_p$ projection problem \cite{FMMZ-24} asks whether the $p$th power of the
volume-to-projection ratio is superadditive.

\begin{ques}\label{ques:ratio-Lp-zonoid}
Let $p\geq1$ and $n \geq 2$. Is it true that for all $L_p$-zonoids $A,B\subset\R^n$ and every
$u\in\s^{n-1}$,
\begin{equation}\label{eq:question1}
 \left(\frac{|A\oplus_p B|}{|P_{u^\perp}(A\oplus_p B)|}\right)^p
 \geq
 \left(\frac{|A|}{|P_{u^\perp}A|}\right)^p
 +\left(\frac{|B|}{|P_{u^\perp}B|}\right)^p?
\end{equation}
\end{ques}

When $p=1$, the inequality reduces to \eqref{conj:str-zonoid}
for zonoids. The planar case follows from Bonnesen's inequality
\cite{B-1929}, whereas in \cite{FHMNWZ-26-I} we show that it fails in
every dimension $n\geq 3$.  For $p=2$, the answer is affirmative in every dimension, whereas the answer is negative
for $p>2$; see~\cite[Theorem~6.2 and Proposition~6.8]{FMMZ-24}.  Our first result settles the remaining range $p\in (1,2)$.

\begin{restatable}{theorem}{RatioLpZonoid}\label{thm:ratio-Lp-zonoid}
For every $1<p<2$ and every $n\geq2$, there exist $L_p$-zonotopes
$A,B\subset\R^n$ and $u\in\s^{n-1}$ for which
\eqref{eq:question1} fails\MainTheoremExtra
\end{restatable}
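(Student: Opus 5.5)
The plan is to build a planar counterexample and then lift it to every dimension $n\ge 3$ by adding an orthogonal segment.

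\emph{Planar step.} For an origin-symmetric body $K\subset\R^2$ and $u\in\s^1$, the projection $P_{u^\perp}K$ is a segment of length $2h_K(v)$, where $v\perp u$. Since $h^p_{A\oplus_p B}=h_A^p+h_B^p$, the denominators of \eqref{eq:question1} combine additively after raising to the $p$th power. The quantity that has to be controlled is therefore the area $|A\oplus_p B|$.

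To keep areas computable, I would use $L_p$-sums of two segments. The $L_p$-sum of the segments $a[-w_1,w_1]$ and $b[-w_2,w_2]$ is a linear image of the unit ball $B_q^2$, where $1/p+1/q=1$. Its area is $c_q\,ab\,|\det(w_1,w_2)|$ with $c_q=|B_q^2|$. I would take
\[
A=[-e_1,e_1]\oplus_p \varepsilon[-e_2,e_2],\qquad B=[-w,w]\oplus_p \varepsilon[-w',w'],
\]
with $w,w'$ at a fixed angle to $e_1,e_2$, and choose $u$ so that $v$ is a convenient direction. The volume of $A\oplus_p B$ (four segments) is not given by a closed formula. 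I would estimate it from below and above via its support function, or by an asymptotic expansion in $\varepsilon\to0$, where the body is close to the $L_p$-sum of the two long segments. I expect the leading terms in $\varepsilon$ to give left side $\approx C_1\varepsilon^p$ and right side $\approx C_2\varepsilon^p$ with $C_1<C_2$ whenever $1<p<2$, the gap vanishing at $p=1$ and $p=2$.

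This asymptotic comparison, with the choice of angles that makes $C_1<C_2$, is the main obstacle. Uniformity in $p$ may force the angle to depend on $p$. If the thin-body regime does not produce a gap, my fallback is a direct numerical or perturbative comparison of $\ell_q$-type bodies near $p=1$ and near $p=2$, connected by a continuity argument.

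\emph{Lifting to higher dimensions.} Let $K\subset e_n^\perp$ be an $L_p$-zonotope and $t>0$. The body $K\oplus_p t[-e_n,e_n]$ equals
\[
\bigl\{(y,s): |s|\le t,\ y\in(1-|s/t|^q)^{1/q}K\bigr\}.
\]
Hence its volume is $t\,c(\dim K,q)\,|K|$ for a constant $c(\dim K,q)$ depending only on $\dim K$ and $q$.

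Take $u\in e_n^\perp$. Then $P_{u^\perp}$ commutes with this construction, with $\dim P_{u^\perp}K=\dim K-1$, so the projection has volume $t\,c(\dim K-1,q)\,|P_{u^\perp}K|$. Consequently the ratio $|K\oplus_p tI|/|P_{u^\perp}(K\oplus_p tI)|$, where $I=[-e_n,e_n]$, is the $(n-1)$-dimensional ratio for $K$ multiplied by a factor independent of $K$ and $t$.

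Apply this to $A'=A\oplus_p 2^{-1/p}I$ and $B'=B\oplus_p 2^{-1/p}I$. Then $A'\oplus_p B'=(A\oplus_p B)\oplus_p I$, and all three ratios in \eqref{eq:question1} are multiplied by the same constant. The failure of \eqref{eq:question1} therefore passes from dimension $n-1$ to dimension $n$, and induction from $n=2$ completes the proof.
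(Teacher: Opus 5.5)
Your lifting from dimension $n-1$ to $n$ is correct and is essentially the paper's argument; the paper adjoins $[-e_3,e_3],\dots,[-e_n,e_n]$ in one step and computes the common factor with Gamma functions.

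The gap is the planar step, which carries the whole content of the theorem. You never exhibit a planar counterexample, only the expectation that the $\varepsilon^p$-coefficients satisfy $C_1<C_2$, and for your family this cannot happen:
\begin{itemize}
\item If $w\not\parallel e_1$ and $v$ is orthogonal to neither $e_1$ nor $w$, then $A\oplus_pB$ converges to the nondegenerate body $[-e_1,e_1]\oplus_p[-w,w]$. The left side of \eqref{eq:question1} stays of order $1$ while the right side is $O(\varepsilon^p)$, so the inequality holds.
\item Both sides are of order $\varepsilon^p$ only when $w\parallel e_1$ and $\langle e_1,v\rangle\neq0$. But a thin body $a[-e_1,e_1]\oplus_p\varepsilon M$ has ratio $\varepsilon\,h_M(e_2)\,|B_q^2|\,(2|\langle e_1,v\rangle|)^{-1}(1+o(1))$, independent of $a$. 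Since $h_M^p$ is additive under $\oplus_p$, the leading terms coincide: $C_1=C_2$.
\item In the remaining cases ($v\perp e_1$ or $v\perp w$), both sides converge to the same positive constant.
\end{itemize}
So your configuration always degenerates to an equality case, and the sign of the defect is decided at the next order. Deciding it needs a quantitative estimate of how $|A\oplus_pB|$ moves, which is exactly the step you call the main obstacle and leave open. The fallback does not repair this. A strict counterexample at $p_0$ persists only near $p_0$. The planar inequality is true at $p=1$ (Bonnesen) and at $p=2$, so there is nothing to continue from the endpoints. Numerics cannot cover a continuum of exponents.

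What is missing is a perturbation under which the area of the $L_p$-sum moves at strictly lower order than the right-hand side gains. The paper takes $A=[-e_1,e_1]\oplus_p[-e_2,e_2]$, $B_t=[-e_1,e_1]\oplus_p[-(e_1+te_2),e_1+te_2]$ and $u=e_2$.
\begin{itemize}
\item $A\oplus_pB_0$ has the same ratio as $A$, while $|B_t|=t|A|$ and $P_{e_2^\perp}B_t=2^{1/p}P_{e_2^\perp}A$. So the right side gains $\frac{t^p}{2}\bigl(|A|/|P_{e_2^\perp}A|\bigr)^p$.
\item $|A\oplus_pB_t|-|A\oplus_pB_0|=o(t^p)$. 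After a shear of determinant one, the linear term in $h^p$ cancels, leaving a change that is $O(t^2)$ pointwise off $\pm e_2$ and $O(t^p)$ uniformly (Lemma~\ref{lem:p-taylor-remainder}).
\item Since $K_0=A\oplus_pB_0$ is a linear image of $B_q^2$ with $q>2$, it is strictly convex, so $S_{K_0}(\{\pm e_2\})=0$. Dominated convergence, Minkowski's first inequality and $K_0\subset K_t$ (where $K_t$ is the sheared $A\oplus_pB_t$) then conclude.
\end{itemize}
The relation $t^2=o(t^p)$ is where $p<2$ enters.

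Your own family can in fact be rescued in an equality-limit regime, e.g.\ $w=e_2$, $w'=e_1+e_2$, $u=e_1$. Both sides tend to $(|B_q^2|/2)^p$, and the right side gains $\varepsilon^p(|B_q^2|/2)^p$. Take $M=[-e_2,e_2]\oplus_p[-(e_1+e_2),e_1+e_2]$ and $V_p(K,M)=\frac12\int h_M^p h_K^{1-p}\,dS_K$. The $L_p$ first variation formula $|B_q^2\oplus_p\varepsilon M|=|B_q^2|+\frac{2}{p}\varepsilon^pV_p(B_q^2,M)+o(\varepsilon^p)$ then shows that the left side gains $(2\theta-1)\varepsilon^p(|B_q^2|/2)^p$, where $\theta=V_p(B_q^2,[-(e_1+e_2),e_1+e_2])/|B_q^2|$. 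Finally, $\theta<1$ follows from the reflection symmetry of $B_q^2$ and $|a+b|^p+|a-b|^p<2(|a|^p+|b|^p)$ for $ab\neq0$, $1<p<2$. Some next-order computation of this kind must be supplied; as written, the planar case is unproved.
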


Motivated by the log-submodularity property of volumes studied in~\cite{FHMNWZ-26-I}, we also consider the weaker one-term inequality.

\begin{ques}\label{ques:ratio-Lp-zonoid_weak}
Let $p\geq1$, and $n \geq 2$. Is it true that for all $L_p$-zonoids $A,B\subset\R^n$ and every
$u\in\s^{n-1}$,
\begin{equation}\label{eq:question1weak}
 \frac{|A\oplus_p B|}{|P_{u^\perp}(A\oplus_p B)|}
 \geq
 \frac{|A|}{|P_{u^\perp}A|}?
\end{equation}
\end{ques}
When $p =2$, this weaker inequality follows immediately from the strong inequality and hence it holds in every dimension. For $p>2$, the inequality was shown in \cite{FMMZ-24} to fail in
every dimension. For $p=1$, it holds in dimensions $n\leq 3$
\cite{B-1929,FMMZ-24}, whereas our companion paper
\cite{FHMNWZ-26-I}  and \cite{S-26} show that it fails in every dimension $n\geq 4$.  Continuity of Firey's sum in $p$ consequently gives counterexamples in dimensions $n\geq4$ for $p>1$ sufficiently close to $1$. Unlike the strong inequality, we show that the planar weak inequality holds throughout the full interval $p \in [1,2]$.

\begin{restatable}{theorem}{RatioLpZonoidweak}\label{thm:ratio-Lp-zonoidwekk}
Let $1\leq p\leq2$, let $A,B\subset\R^2$ be $L_p$-zonoids, and let
$u\in\s^1$. Then \eqref{eq:question1weak} holds.
\end{restatable}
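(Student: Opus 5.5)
We need a plan for the planar weak inequality $|A\oplus_p B|/|P_{u^\perp}(A\oplus_p B)| \ge |A|/|P_{u^\perp}A|$ for $1\le p\le 2$.

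\textbf{Reduction.} In the plane, $P_{u^\perp}K$ is a segment of length $2h_K(v)$, where $v\perp u$; this uses that $L_p$-zonoids are origin-symmetric. Writing $C=A\oplus_p B$, the inequality becomes
\[
|C|\,h_A(v)\ \ge\ |A|\,h_C(v).
\]
By approximation and continuity of all quantities under Hausdorff limits, it suffices to treat $L_p$-zonotopes. We would also like to reduce to $B$ a single symmetric segment $[-w,w]$, by iterating: if the inequality holds for adding one segment, then adding segments one at a time gives a chain of inequalities, since $\oplus_p$ is associative. So the core claim is that
\[
t\mapsto \frac{|A\oplus_p t[-w,w]|}{h_{A\oplus_p t[-w,w]}(v)}
\]
is nondecreasing. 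Indeed $h_{A\oplus_p tS}^p=h_A^p+t^p|\langle w,\cdot\rangle|^p$, and $A\oplus_p t_2S=(A\oplus_p t_1S)\oplus_p (t_2^p-t_1^p)^{1/p}S$. Hence monotonicity from $t=0$, applied to an arbitrary $L_p$-zonoid $A$, suffices, and it is enough to prove that the derivative at $t=0^+$ is nonnegative for every $L_p$-zonoid $A$ and unit vectors $v,w$. More robustly, we would prove the finite inequality for one segment directly.

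\textbf{Derivative computation.} Set $\varepsilon=t^p$. By the $L_p$ mixed volume formula (Lutwak),
\[
\frac{d}{d\varepsilon}\Big|_{0}|A\oplus_p \varepsilon^{1/p}S| = \frac1p\int_{S^1} h_S^p\, h_A^{1-p}\, dS_A = \frac1p\int |\langle w,x\rangle|^p h_A(x)^{1-p}\, dS_A(x),
\]
and
\[
\frac{d}{d\varepsilon}h_{A\oplus_p\cdot}(v)=\frac{|\langle w,v\rangle|^p}{p\,h_A(v)^{p-1}}.
\]
The needed inequality is therefore
\[
h_A(v)^p\int_{S^1}|\langle w,x\rangle|^p h_A(x)^{1-p}\,dS_A(x)\ \ge\ |A|\,|\langle w,v\rangle|^p .
\]
This is a two-dimensional statement about a single symmetric convex body $A$, and we would prove it for every symmetric planar convex body, not only $L_p$-zonoids. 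Parametrize by the boundary points $y\in\partial A$ with outer normal $x$, using the duality map $x\mapsto y(x)$, so that $h_A(x)=\langle x,y\rangle$. Then $|\langle w,x\rangle|$ can be compared with $|\langle w,v\rangle|$ via the normalized duality map. The key estimate we expect to need is a sharp inequality of the type
\[
\frac{|\langle w,x\rangle|}{h_A(x)}\ \ge\ c\,\frac{|\langle w,v\rangle|}{h_A(v)}
\]
on a large enough set of $x$, or, more likely, an integrated version. Using $|A|=\frac12\int h_A\,dS_A$, the inequality becomes
\[
\int \Big(\frac{|\langle w,x\rangle|}{h_A(x)}\Big)^p h_A\, dS_A \ \ge\ \frac12\Big(\frac{|\langle w,v\rangle|}{h_A(v)}\Big)^p\int h_A\, dS_A .
\]
By Jensen or Hölder, since $p\ge1$ and the probability measure $h_A\,dS_A/(2|A|)$ is the cone-volume measure, the left side is at least $2|A|\big(\int |\langle w,x\rangle|\,dS_A/(2|A|)\big)^p$. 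Now $\int|\langle w,x\rangle|\,dS_A=2\cdot 2h_A(w^{\perp\text{-rotated}})$, namely twice the width of $A$ in the direction $w^\perp$, which equals $4h_{JA}(w)$ where $J$ is rotation by $90^\circ$. So it suffices to show
\[
\frac{2h_{JA}(w)}{|A|}\ \ge\ \frac{|\langle w,v\rangle|}{h_A(v)}\cdot 2^{-1/p}.
\]
This is a planar inequality comparing the area with the product of a width and a support value. For $p=1$ it reduces to Bonnesen-type or parallelogram bounds: $|A|\le 4h_A(v)h_{JA}(w)/|\langle w,v\rangle|$, because $A$ is contained in the parallelogram bounded by the support lines in directions $v$ and $Jw$. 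That holds even with constant $1$. The extra factor $2^{-1/p}$ gives slack, but Jensen may lose too much when $p\to2$.

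\textbf{Main obstacle.} The crude Jensen step presumably fails near $p=2$. We would fix this by using the precise pairing $x\leftrightarrow y(x)$, meaning the normalized duality map, and a sharp pointwise bound relating $|\langle w,x\rangle|/h_A(x)$ to the geometry of the parallelogram circumscribed in directions $v$ and $w$. The $p=2$ ellipse case is the equality case to calibrate against. Proving this sharp duality-map estimate uniformly for $p\in[1,2]$ is where we expect the difficulty to lie. If the derivative inequality holds for all symmetric planar bodies, integration in $\varepsilon$ completes the argument.
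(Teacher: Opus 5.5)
\textbf{There is a genuine gap: the key inequality is left unproved, and both routes you suggest for it fail.}

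Your reductions are fine: to $L_p$-zonotopes, to a single segment, and then, via $\varepsilon=t^p$ and the semigroup property, to the sign of the first variation. The resulting inequality
\[
h_A(v)^p\int_{\s^1}|\langle w,x\rangle|^p\,h_A(x)^{1-p}\,dS_A(x)\ \ge\ |A|\,|\langle w,v\rangle|^p
\]
is a correct reformulation for $L_p$-zonoids $A$. But it is equivalent to the theorem itself, and you do not prove it.

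First, it is false for general origin-symmetric planar bodies when $1<p\le2$, so your plan to prove it ``for every symmetric planar convex body'' cannot succeed. Take the hexagon with vertices $(\pm1,\pm(1-\eta))$, $(0,\pm1)$ and $v=w=e_1$. The left side is $4(1-\eta)+4\eta^p$ while $|A|=4-2\eta$, so the inequality fails whenever $\eta^{p-1}<1/2$. The $L_p$-zonoid structure must therefore enter.

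Second, the Jensen step loses too much for every $p\in(1,2]$, not only near $p=2$. The factor $2^{-1/p}>1/2$ makes your target stronger than the parallelogram bound, not weaker. Concretely, take $A=[-e_1,e_1]\oplus_p[-e_2,e_2]=B_q^2$ and $v=w=e_1$. Your reduced inequality then reads $|B_q^2|\le 2^{1+1/p}$. However, $B_q^2$ is strictly convex and strictly contains the octagon with vertices $(\pm1,0)$, $(0,\pm1)$, $(\pm2^{-1/q},\pm2^{-1/q})$, whose area is exactly $2^{2-1/q}=2^{1+1/p}$.

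What is missing is a mechanism that uses $h_A(z)=\|Mz\|_p$. The paper proceeds as follows.
\begin{enumerate}
\item The shear $(x,y)\mapsto(x,y-\frac{\beta}{\alpha}x)$ preserves area and the projection onto $\R e_1$, and makes the added segment parallel to the projection line.
\item After normalizing $P_{\R e_1}A=[-e_1,e_1]$, the Lutwak--Yang--Zhang representation $A\oplus_p[-\delta e_1,\delta e_1]=\bigcup_{|s|\le1}\bigl((1-|s|^q)^{1/q}A+\delta s e_1\bigr)$, with an optimally chosen $s$, gives the pointwise chord comparison $L_\delta\bigl((1+\delta^p)^{1/p}x\bigr)\ge L(x)$. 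This integrates directly to the finite inequality, so no derivative is needed.
\item The comparison rests on the monotonicity of $x\mapsto L(x)/(1-x^q)^{1/q}$ on $[0,1)$. This fails for general bodies (a rectangle has constant $L$).
\item For $L_p$-zonoids the monotonicity is proved by writing $h_A(s,1)=\|sa+b\|_p$ and computing $L$ and $L'$ through the Legendre transform. One then applies the sharp estimate $\|J(u)-J(v)\|_q\le 2\bigl(\|u-v\|_p/(\|u\|_p+\|v\|_p)\bigr)^{p-1}$ for the normalized duality map of $\ell_p^m$, with $u=s_+a+b$ and $v=s_-a+b$.
\end{enumerate}
The ``duality map'' in your sketch is the normal-to-boundary-point pairing on $\partial A$. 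It exists for every convex body, so by itself it cannot supply the needed estimate.
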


The higher-dimensional weak problem remains open in the intermediate range.
We conjecture that it is affirmative for $1<p<2$ in dimension three and
negative for every $1<p<2$ in dimensions $n\geq4$.

Question \ref{ques:ratio-Lp-zonoid} is closely related to determinant inequalities. For $p =1$ and $p =2$, the relevant volume expressions admit a direct representation of the volume of $L_p$-zonotopes in terms of determinants, see \cite{FMMZ-24, GJ-97}. This leads naturally to the following determinant question proposed in \cite{FMMZ-24}.

\subsection{Inequalities for determinant}
Let
$\mathcal U=(u_1,\ldots,u_N)$ be a finite sequence in $\R^n$, let
$v\in\s^{n-1}$, and, for $1\leq a\leq b\leq N$, set
\begin{equation}\label{eq:determinant-ratio-intro}
 R_{\mathcal U,v}^{(p)}[a,b]
 =
 \frac{\displaystyle
   \sum_{\substack{M\subset\{a,\ldots,b\}\\|M|=n}}
   |\det(u_m)_{m\in M}|^p}
 {\displaystyle
   \sum_{\substack{M\subset\{a,\ldots,b\}\\|M|=n-1}}
   |\det(P_{v^\perp}u_m)_{m\in M}|^p}.
\end{equation}
An empty sum is understood as zero, and the ratio is defined to be zero when
its denominator vanishes. We also note that the projected determinant is computed in any orthonormal basis of $v^{\perp}$, equivalently as the $(n-1)$-dimensional volume of the projected parallelotope.

The sums of powers of minors appearing in this definition are closely
related to the vector-valued Maclaurin inequalities studied by
Brazitikos and McIntyre~\cite{BM-22}. Their inequalities compare
normalized sums of the $k$ and $(k-1)$ dimensional volumes
generated by a single family of vectors. In our setting, the numerator
is the top-dimensional sum, whereas averaging the denominator over
$v\in {\mathbb S}^{n-1}$ yields, up to a constant depending only on $n$ and
$p$, the corresponding $(n-1)$-dimensional sum. Questions \ref{ques:p-determinant}
and \ref{ques:p-determinant-weak} below are directional analogues concerned, respectively, with
splitting and enlarging the family of vectors.

\begin{ques}\label{ques:p-determinant}
For $p>0$ and $N'< N$, is it true that
\begin{equation}\label{eq:p-determinant}
 R_{\mathcal U,v}^{(p)}[1,N] \geq R_{\mathcal U,v}^{(p)}[1,N']
 +R_{\mathcal U,v}^{(p)}[N'+1,N]?
\end{equation}
\end{ques}
When $p=1$, the numerator and denominator are, respectively, the volume of the zonotope generated by the $u_i$ and the volume of its projection onto $v^\perp$. Consequently, at $p=1$, the
question below is precisely the strong projection inequality for zonotopes,
which is completely answered as explained above.  The determinant-power extension was posed
in~\cite[Question~6.10]{FMMZ-24}, where a counterexample was also given for $p >2$.
For $p=2$, the inequality was proved in every dimension as part of \cite[Theorem 6.2]{FMMZ-24}. The result below provides a complete answer to the remaining cases in this range.

\begin{restatable}{theorem}{DetIneq}\label{thm:determinant-ineq-2}
Let $0<p<2$.  The answer to Question~\ref{ques:p-determinant} is affirmative
in dimension $n=2$ and negative in every dimension $n\geq3$.
\end{restatable}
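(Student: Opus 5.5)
For $n=2$ I would reduce the inequality to the fact that $|x|^p$ is conditionally negative definite on $\R$ when $0<p\le 2$. For $n\ge3$ I would build explicit counterexamples in $\R^3$ and then lift them to all higher dimensions.

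\emph{The planar case.} Fix $v$ and let $w$ be a unit vector spanning $v^\perp$. Write $u_i=a_i w+b_i v$. Assume first that all $a_i\neq0$; the general case should follow by perturbation and continuity of both sides, which I would check directly, since the ratio is defined to be $0$ when the denominator vanishes. Put $t_i=b_i/a_i$. Then
\[
|\det(u_i,u_j)|^p=|a_i|^p|a_j|^p|t_i-t_j|^p \quad\text{and}\quad |P_{v^\perp}u_i|^p=|a_i|^p .
\]
For a block $I$ of indices define the measure $\mu_I=\sum_{i\in I}|a_i|^p\delta_{t_i}$ on $\R$, with mass $m_I$. The denominator of $R^{(p)}[I]$ is $m_I$ and the numerator is $\tfrac12\iint|s-t|^p\,d\mu_I\,d\mu_I$. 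Let $A=\{1,\dots,N'\}$ and $B=\{N'+1,\dots,N\}$, and set $S_A=\tfrac12\iint|s-t|^p\,d\mu_A d\mu_A$, similarly $S_B$, and $C=\iint|s-t|^p\,d\mu_A d\mu_B$.

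Clearing denominators, \eqref{eq:p-determinant} becomes
\[
C\,m_Am_B\ \ge\ S_A m_B^2+S_B m_A^2 .
\]
Normalize to probability measures $\alpha=\mu_A/m_A$ and $\beta=\mu_B/m_B$. The inequality then reads
\[
2\iint|s-t|^p\,d\alpha\,d\beta\ \ge\ \iint|s-t|^p\,d\alpha\,d\alpha+\iint|s-t|^p\,d\beta\,d\beta .
\]
This is exactly $\iint |s-t|^p\,d\sigma\,d\sigma\le0$ for the signed measure $\sigma=\alpha-\beta$ of total mass zero. That holds because $|x|^p$ is conditionally negative definite for $0<p\le2$ (Schoenberg). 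One way to see it is through the Lévy--Khintchine formula
\[
|x|^p=c_p\int_0^\infty\bigl(1-\cos(x\xi)\bigr)\xi^{-1-p}\,d\xi \qquad (0<p<2),
\]
which gives $\iint|s-t|^p\,d\sigma\,d\sigma=-c_p\int|\hat\sigma(\xi)|^2\xi^{-1-p}\,d\xi\le0$. This settles the affirmative part.

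\emph{Dimensions $n\ge3$.} I would first construct a counterexample in $\R^3$ that works for every $p\in(0,2)$, using a degenerate asymptotic regime. Take $A$ to consist of exactly three vectors, so that $R_A$ is a single term $|\det|^p$ divided by three projected $2\times2$ minors. Take $B$ to consist of vectors of size $\varepsilon$, arranged so that $R_B$ is of the same order as the cross terms. I would then expand both sides in $\varepsilon$ and compare the leading powers, of the form $\varepsilon^{p}$ versus $\varepsilon^{2p}$. The aim is a configuration where, to leading order, the left side gains less from the cross determinants $\det(u_a,u_a',\varepsilon u_b)$ than the sum $R_A+R_B$. A natural first attempt puts $u_b$ nearly parallel to $v$: the $B$-vectors then add little to the projected denominators of cross terms, while $R_B$ is large because its own denominator is tiny. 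I would test this at a couple of values of $p$ and then confirm that the sign is uniform in $p\in(0,2)$.

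To pass from $\R^3$ to $\R^n$, I would append the vectors $\lambda e_4,\dots,\lambda e_n$ to both $A$ and $B$, with $v\in\R^3$ and $\lambda\to\infty$. The dominant terms in each numerator and denominator should be those using all appended vectors, and these factor as $\lambda^{p(n-3)}$ times the $3$-dimensional quantities, up to multiplicities. However, the appended copies in the two blocks create new mixed terms in $R_{A\cup B}$, so the limiting inequality may not be exactly the $3$-dimensional one. If this fails, I would instead put the appended vectors only into a construction designed directly in $\R^n$.

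The main obstacle is finding the explicit $3$-dimensional family for which the leading-order comparison has the wrong sign uniformly for all $p\in(0,2)$.
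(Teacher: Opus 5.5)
Your planar argument is correct and is essentially the paper's proof: weights $|a_i|^p$ at the slopes $b_i/a_i$, reduction to $\iint|s-t|^p\,d\sigma\,d\sigma\le0$ for $\sigma=\alpha-\beta$, and the $1-\cos$ representation of $|x|^p$. Your perturbation to $a_i\neq0$ is legitimate, because the cleared inequality $Cm_Am_B\ge S_Am_B^2+S_Bm_A^2$ is continuous in the vectors. The paper instead cancels $\sum_{a_i=0}|b_i|^p$ from both sides.

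The genuine gap is the negative half. For $n\ge3$ you never exhibit a configuration violating \eqref{eq:p-determinant}, and that is the entire content of this half of the theorem; what you call ``the main obstacle'' is exactly what must be supplied. Moreover, the first attempt you suggest provably fails, at least for $0<p\le1$. Take $A=\{u_1,u_2,u_3\}$ and $B=\{\varepsilon w_b\}$ with $w_b=c_bv+\eta z_b$, $z_b\in v^\perp$. Both $R_{A\cup B}-R_A$ and $R_B$ are of order $\varepsilon^p$:
\[
R_{A\cup B}-R_A-R_B=\varepsilon^p\Bigl(\frac{X-R_AY}{D_A}-R_{\mathcal W}\Bigr)+O(\varepsilon^{2p}).
\]
Here $X=\sum_{i<j,\,b}|\det(u_i,u_j,w_b)|^p$, $Y=\sum_{i,\,b}|\det(P_{v^\perp}u_i,P_{v^\perp}w_b)|^p$, $D_A$ is the denominator of $R_A$, and $R_{\mathcal W}$ is the ratio for the unscaled $w_b$. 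The map $y+tv\mapsto\eta y+tv$ ($y\in v^\perp$) multiplies every $3$-minor and every projected $2$-minor by $\eta^2$. So $R_{\mathcal W}$ does not depend on $\eta$, and it does \emph{not} become large as the $w_b$ approach $v$. Meanwhile $X\to D_A\sum_b|c_b|^p$ and $Y\to0$ as $\eta\to0$. Take $\eta=1$ and expand $\det(w_b,w_{b'},w_{b''})=\pm(c_bD_{b'b''}-c_{b'}D_{bb''}+c_{b''}D_{bb'})$ with $D_{bb'}=\det(z_b,z_{b'})$. Using $|x+y+z|^p\le|x|^p+|y|^p+|z|^p$ gives $R_{\mathcal W}<\sum_b|c_b|^p$ whenever all $c_b\neq0$. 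Hence the leading coefficient is positive for small $\eta$, and this regime satisfies the inequality rather than violating it.

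The paper's counterexamples are non-perturbative and depend on the range of $p$. There are two explicit configurations of eight vectors in $\R^3$, each with $N'=4$ and $v=e_3$. For $1<p<2$ one has $R[1,4]=R[5,8]=2^p$ and $R[1,8]=(3+7\cdot2^p+3^p)/5$. Failure is then equivalent to $2^p>1+3^{p-1}$, i.e.\ strict concavity of $x^{p-1}$; the configuration is tuned to give equality at $p=1$ and $p=2$. For $0<p\le1$ a second configuration gives the defect $\frac25\cdot\frac{6^p+2\cdot7^p-1-3^p-13^p}{1+2\cdot2^p+4^p}>0$, by concavity of $x^p$. Insisting on one family for all $p\in(0,2)$ is therefore an unnecessary extra constraint.

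Your lifting step is fine, and in fact exact. Since $v\in\R^3$, a nonzero $n$-minor of $A\cup B$ must use exactly one copy of each $e_k$, $k\ge4$, plus three $u$'s; a nonzero projected $(n-1)$-minor uses one copy of each $e_k$ plus two projected $u$'s. So the numerator and denominator of $R_{A\cup B}$ both acquire the factor $2^{n-3}\lambda^{p(n-3)}$, and those of each block acquire $\lambda^{p(n-3)}$. All three ratios therefore equal their three-dimensional values for every $\lambda>0$ (the paper takes $\lambda=1$). There are no stray mixed terms and no limit is needed. But this only transports a three-dimensional counterexample, which your proposal does not contain.
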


Removing the second term on the right-hand side of \eqref{eq:p-determinant} leads to the weak determinant problem.

\begin{ques}\label{ques:p-determinant-weak}
Let $0<p<2$ and $N'\leq N$. Is it true that 
\begin{equation}\label{eq:p-determinant-}
  R_{\mathcal U,v}^{(p)}[1,N]\geq R_{\mathcal U,v}^{(p)}[1,N']?
\end{equation}
\end{ques}
At $p=1$ and $p = 2$, the weak question is settled by the work \cite{B-1929, FMMZ-24, FHMNWZ-26-I} using the determinant representation of the volume: it holds in dimension at most three for $p =1$, and it holds in every dimension for $p=2$. When $p >2$, the counterexample was provided in \cite{FMMZ-24}.
The following theorem settles all remaining cases.

\begin{restatable}{theorem}{DetIneqWeak}\label{thm:determinant-ineq-weak}
The answer to Question~\ref{ques:p-determinant-weak} is affirmative for
$n=3$ and $0<p<1$.  It is negative for $n\geq3$ and $1<p<2$, and also for
$n\geq4$ and $0<p<1$.
\end{restatable}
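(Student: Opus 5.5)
The plan is to first reduce Question~\ref{ques:p-determinant-weak} to adding a single vector, then normalize by a shear, and then handle the positive case ($n=3$, $0<p<1$) and the counterexamples separately.

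\textbf{Reduction to one added vector.} By iterating, it suffices to treat $N=N'+1$. Put $w=u_N$, write $\mathcal U'=(u_1,\dots,u_{N'})$ and $P=P_{v^\perp}$. Every $n$-subset of $\{1,\dots,N\}$ either avoids $N$ or contains it, and likewise for the $(n-1)$-subsets. Hence both numerator and denominator of $R^{(p)}_{\mathcal U,v}[1,N]$ split as the old sum plus a sum of new terms. The mediant inequality then shows that \eqref{eq:p-determinant-} is equivalent to
\[
\frac{\sum_{|M|=n-1}|\det(u_M,w)|^p}{\sum_{|M'|=n-2}|\det(Pu_{M'},Pw)|^p}\ \geq\ R^{(p)}_{\mathcal U',v}[1,N'],
\]
with the conventions of \eqref{eq:determinant-ratio-intro}.

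\textbf{Shear normalization.} Consider linear maps $T$ with $PT=AP$ for some linear $A$ on $v^\perp$ and $\langle Tx,v\rangle=\langle x,v\rangle+\ell(Px)$. Under such a $T$ every term of both numerator and denominator is multiplied by the same factor $|\det A|^p$, so the ratio is invariant. If $Pw\neq 0$, we may therefore assume $w\in v^\perp$. If $Pw=0$, the new numerator terms are nonnegative while the new denominator terms vanish, so the inequality is trivial.

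\textbf{The case $n=3$, $0<p<1$.} Take $v=e_3$ and $w=e_1$, and write $u_i=(x_i,y_i,z_i)$. The inequality to prove becomes
\[
\Bigl(\sum_{i<j<k}|\det(u_i,u_j,u_k)|^p\Bigr)\sum_i|y_i|^p\ \le\ \Bigl(\sum_{i<j}|x_iy_j-x_jy_i|^p\Bigr)\sum_{i<j}|y_iz_j-y_jz_i|^p .
\]
I will expand each $3\times3$ determinant along a row or column. For $p\le1$, subadditivity of $t\mapsto t^p$ then bounds the left side by a sum of products of $|\text{coordinate}|^p$ and $|2\times2\text{ minor}|^p$. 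The goal is to match these terms injectively with terms of the right-hand product, possibly after choosing the expansion adaptively (for instance ordering indices by the slopes $z_i/y_i$). The planar case of Theorem~\ref{thm:determinant-ineq-2} is available as a two-dimensional ingredient, and a Plücker-type identity relating the two families of $2\times2$ minors should help. This combinatorial domination is the step I expect to be the main obstacle. I would first test it on $N'=3$ and then try an induction on $N'$.

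\textbf{Counterexamples.} For $n=3$ and $1<p<2$, I will look for a small explicit family. A natural first try is three nearly coplanar vectors plus one added vector $w\in v^\perp$, optimized by a one-parameter perturbation. Here subadditivity fails, and expanding the ratio to first order in the perturbation should show the reverse inequality. For $n\ge4$ and $0<p<1$, I will adapt the $p=1$ counterexamples from \cite{FHMNWZ-26-I,S-26} by a similar perturbative construction, using only that the cancellation there does not depend on $p$ being $1$.

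\textbf{Lifting to higher dimensions.} To pass from dimension $n$ to $n+1$, I adjoin the vector $e_{n+1}$ orthogonal to $\mathbb R^n\ni v$, with $v$ unchanged. All top minors not containing $e_{n+1}$ vanish, and the remaining minors factor through the minors in $\mathbb R^n$. This keeps the numerator and denominator sums equal to the lower-dimensional ones, so counterexamples propagate to every larger $n$.
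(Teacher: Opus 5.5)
Your preliminary reductions are fine and agree with the paper: passing to one added vector via the mediant inequality, normalizing by a map that fixes $v$, the trivial case $Pw=0$, and lifting by adjoining coordinate vectors (these must be placed among the first $N'$ vectors, so that they enter both ratios).

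\textbf{Positive case.} The genuine gap is the case $n=3$, $0<p<1$. The ``combinatorial domination'' you defer is the whole content of that case, and the scheme ``Laplace expansion, subadditivity, then match terms'' cannot work. Take $u_1,\dots,u_{N'}$ in general position, so that all entries, all $2\times2$ minors and all $3\times3$ determinants are nonzero, and let $p\to0^+$. Every nonzero $|c|^p$ tends to $1$. Your left side therefore tends to $N'\binom{N'}{3}$ and your right side to $\binom{N'}{2}^2$, so in this regime the inequality has only the slack factor $\frac{3(N'-1)}{2(N'-2)}\to\frac32$. A row or column expansion bounds each $|\det|^p$ by three terms tending to $1$. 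Even the two-term split $|\det(u_i,u_j,u_k)|=|y_iy_jy_k|\,\bigl|(a_j-a_i)(b_k-b_i)-(a_k-a_i)(b_j-b_i)\bigr|$, with $u_i=y_i(a_i,1,b_i)$, gives two such terms. The resulting upper bound tends to at least $2N'\binom{N'}{3}$, which exceeds $\binom{N'}{2}^2$ for $N'\geq6$ (already for $N'\geq4$ with three terms). So no matching, injective, adaptive or fractional, can close the argument, and neither the planar strong inequality nor a Pl\"ucker relation can recover a loss incurred term by term.

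The missing idea is to reduce to the known case $p=1$ of the weak inequality in $\mathbb{R}^3$. For $0<p<1$ one has the cut-metric representation $|a-b|^p=\int|1_{[s,s+t]}(a)-1_{[s,s+t]}(b)|\,d\mu(s,t)$ with $d\mu=\frac{p(1-p)}{2}\,ds\,t^{p-2}\,dt$. One also has a pointwise bound $|\det(\mathbf 1,a,b)|^p\le\iint|\det(\mathbf 1,1_I(a),1_J(b))|\,d\mu(I)\,d\mu(J)$. To prove it, note that for $0/1$ data $|\det|=\sum_{i\neq j}X_iY_j$. Hence the right side equals $\sum_{i\neq j}A_iB_j$ with $A_i=\frac12(|a_i-a_j|^p+|a_i-a_k|^p-|a_j-a_k|^p)$, and $|\det|^p\le\sum_{i\neq j}A_iB_j$ is then checked by one-variable monotonicity in four cases. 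With weights $w_i=|y_i|^p$, this bounds your left side by an integral whose integrand is the left side of the $p=1$ inequality for the vectors $(w_i1_I(a_i),\,w_i,\,w_i1_J(b_i))$. Applying the $p=1$ result pointwise, then Fubini and the representation formula, returns exactly your right-hand side; vectors with $y_i=0$ are handled by perturbation.

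\textbf{Negative cases.} Here you give no example, only heuristics. For $n=3$, $1<p<2$, ``three nearly coplanar vectors plus a perturbation'' is not a construction, and no first-order computation is exhibited. What is needed is an explicit family with computable ratios. The paper takes six vectors, with $N'=5$, depending on $\varepsilon$; failure reduces to $\varepsilon\bigl(4(2\cdot3^p-4^p-2)+O(\varepsilon)\bigr)>0$, and $2\cdot3^p>4^p+2$ follows from strict concavity of $x\mapsto x^{p-1}$.

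For $n\geq4$, $0<p<1$, perturbing a $p=1$ counterexample gives, by continuity, failure only for $p$ in some neighbourhood of $1$. The theorem asserts failure for every $p\in(0,1)$. As $p\to0^+$ the ratios become counts of nonvanishing minors, so an example must be designed to fail uniformly on the whole interval. Your sentence that the cancellation ``does not depend on $p$ being $1$'' is precisely what would have to be proved. The paper instead exhibits seven explicit vectors in $\mathbb{R}^4$ with $v=e_2$ and $N'=6$. For these the comparison reduces to $-1+2\cdot3^p-3\cdot4^p+10\cdot6^p-9^p+2\cdot18^p-36^p>0$, which is checked by elementary estimates on all of $(0,1)$.
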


The paper is organized as follows.  Section~\ref{sec:strong-lp-0} is devoted to the study of $L_p$ inequalities, and proves Theorem~\ref{thm:ratio-Lp-zonoid} and~\ref{thm:ratio-Lp-zonoidwekk}.
Next, in Section~\ref{sec:determinants}, we present the study of determinant-power inequalities, and prove Theorem~\ref {thm:determinant-ineq-2} and~\ref{thm:determinant-ineq-weak}. The appendix contains the auxiliary analytic estimates.

\section{Inequalities for the volume of projections of \texorpdfstring{$L_p$}{Lp} zonotopes} \label{sec:strong-lp-0}
\subsection{Failure of the strong \texorpdfstring{$L_p$}{Lp} projection inequality}\label{sec:strong-lp}
We begin with the following auxiliary lemma.
\begin{lemma} \label{lem:p-taylor-remainder}
    Let $r,s \in \R$. Then,
    \begin{equation}
    \label{eq:p-taylor-remainder}
    \left|
        |r+s|^p-|r|^p
        -p|r|^{p-1}\operatorname{sgn}(r)s
    \right|
    \leq 2^{2-p}|s|^p,
    \qquad 1<p<2.
\end{equation}
\end{lemma}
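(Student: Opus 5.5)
The plan is to reduce the inequality to a one-variable estimate by homogeneity, and then treat small and large values of the variable separately.

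\textbf{Reduction.} If $r=0$ the left-hand side is $|s|^p\le 2^{2-p}|s|^p$, so we may assume $r\neq0$. Both sides are positively homogeneous of degree $p$ in $(r,s)$. Replacing $(r,s)$ by $(-r,-s)$ leaves everything unchanged. Hence we may take $r=1$ and write $t=s\in\R$. The claim becomes
\[
|\phi(t)|\le 2^{2-p}|t|^p,\qquad \phi(t)=|1+t|^p-1-pt .
\]
The function $x\mapsto|x|^p$ is convex, and $pt$ is its tangent line at $x=1$. Therefore $\phi\ge0$, and it suffices to prove $\phi(t)\le 2^{2-p}|t|^p$.

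\textbf{Small $|t|$.} Suppose $|t|\le 1$. Taylor's formula with integral remainder gives
\[
\phi(t)=p(p-1)t^2\int_0^1(1-\theta)|1+\theta t|^{p-2}\,d\theta .
\]
Here $p-2<0$, so this integrand can blow up when $t$ is near $-1$. A cleaner route avoids Taylor's formula: the derivative $f'(x)=p|x|^{p-1}\operatorname{sgn}x$ is $(p-1)$-Hölder continuous. The standard bound is
\[
\bigl||a|^{p-1}\operatorname{sgn}a-|b|^{p-1}\operatorname{sgn}b\bigr|\le 2^{2-p}|a-b|^{p-1}.
\]
This is sharp at $a=-b$, since the left-hand side is $2|a|^{p-1}$ and $|a-b|^{p-1}=2^{p-1}|a|^{p-1}$. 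It follows from concavity of $x\mapsto x^{p-1}$: for same-sign arguments the constant $1$ suffices, and for opposite signs one uses $x^{p-1}+y^{p-1}\le 2^{2-p}(x+y)^{p-1}$.

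\textbf{Integrating the Hölder bound.} For all $t$,
\[
\phi(t)=\int_0^1\bigl(f'(1+\theta t)-f'(1)\bigr)t\,d\theta
\le \int_0^1 p\,2^{2-p}|\theta t|^{p-1}|t|\,d\theta .
\]
The last integral equals $p\,2^{2-p}|t|^p/p=2^{2-p}|t|^p$, which is exactly the claim. This argument does not actually need the restriction $|t|\le 1$, so the same computation covers every $t$ and no separate large-$|t|$ case is required.

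\textbf{Main obstacle.} The only real work is the Hölder inequality for the signed power $x\mapsto|x|^{p-1}\operatorname{sgn}x$ with constant $2^{2-p}$.

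For same signs, say $0<b<a$, concavity gives subadditivity of $x^{p-1}$. Hence $a^{p-1}-b^{p-1}\le(a-b)^{p-1}$.

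For opposite signs, take $a=x>0$ and $b=-y<0$. The left-hand side is $x^{p-1}+y^{p-1}$ and $|a-b|=x+y$. By power-mean (Jensen) concavity,
\[
\frac{x^{p-1}+y^{p-1}}{2}\le\left(\frac{x+y}{2}\right)^{p-1},
\]
so $x^{p-1}+y^{p-1}\le 2^{2-p}(x+y)^{p-1}$.

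The case where one of $a,b$ is $0$ is immediate. Since $2^{2-p}\ge1$, both cases give the required bound.
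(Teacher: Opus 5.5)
Your proof is correct and is essentially the paper's argument: the $2^{2-p}$-H\"older bound for $x\mapsto|x|^{p-1}\operatorname{sgn}x$ (subadditivity of $x^{p-1}$ for equal signs, concavity/Jensen for opposite signs), integrated along the segment via the fundamental theorem of calculus. The homogeneity reduction to $r=1$, the convexity remark $\phi\ge 0$, and the abandoned Taylor-remainder detour are harmless but unnecessary, since the integrated H\"older estimate bounds the absolute value directly for all $r,s$.
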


\begin{proof}
    Set
\[
    \phi(r)=|r|^{p-1}\operatorname{sgn}(r),
\]
with $\operatorname{sgn}(0) = 0$.
We first show that
\[
    |\phi(a)-\phi(b)|
    \leq
    2^{2-p}|a-b|^{p-1},
    \qquad a,b\in\mathbb R.
\]
If \(a\) and \(b\) have the same
sign, then
\[
    |\phi(a)-\phi(b)|=
    \left||a|^{p-1}-|b|^{p-1}\right|
    \leq
    |a-b|^{p-1},
\]
where we used $p-1 \in (0,1).$ If \(a\) and \(b\) have opposite signs, then the concavity of
\(x\mapsto x^{p-1}\) gives
\[
    |\phi(a)-\phi(b)|
    =
    |a|^{p-1}+|b|^{p-1}
    \leq
    2^{2-p}(|a|+|b|)^{p-1}
    =
    2^{2-p}|a-b|^{p-1}.
\]
The fundamental theorem of calculus gives
\[
    |r+s|^p-|r|^p
    -ps|r|^{p-1}\operatorname{sgn}(r)
    =
    ps\int_0^1
    \bigl(\phi(r+\lambda s)-\phi(r)\bigr)\,d\lambda.
\]
Therefore,
\[
\begin{aligned}
    \left|
        |r+s|^p-|r|^p
        -p|r|^{p-1}\operatorname{sgn}(r)s
    \right|\leq
    p|s|\int_0^1
    2^{2-p}|\lambda s|^{p-1}\,d\lambda
   =
    2^{2-p}|s|^p.
\end{aligned}
\]
\end{proof}
We shall also use the following integral representation for the mixed volume $V(M,N)$ of planar convex bodies $M$ and $N$; see \cite[Theorem~5.1.7 and (5.19)]{Schneider-14}:
$$
V(M,N)
=
\frac{1}{2}\int_{\mathbb S^1}
h_N(u)\,dS_M(u).
$$
In particular, $V(M,M)=|M|$. Moreover, Minkowski's first inequality \cite[Theorem~7.2.1]{Schneider-14} gives
$$
V(M,N)^2\geq |M|\,|N|.
$$

    \begingroup
\renewcommand{\MainTheoremExtra}{%
, that is,
        \[
            \left(\frac{\left|A \oplus_p B\right|}{\left|P_{e_2^{\perp}}\left(A \oplus_p B\right)\right|}\right)^p 
            <
            \left(\frac{|A|}{\left|P_{e_2^{\perp}} A\right|}\right)^p+\left(\frac{|B|}{\left|P_{e_2^{\perp}} B\right|}\right)^p.
        \]
}
\RatioLpZonoid*
\endgroup
    
    \begin{proof}
We start with the case $n=2$. Let 
    \[
     A = [-e_1,e_1] \oplus_p [-e_2,e_2], \quad B_t = [-e_1,e_1] \oplus_p [-e_1-te_2, e_1 + te_2], \quad   t  \geq 0.
    \]
    Note that 
    \begin{equation}
        \label{eq:PB}
        P_{e_2^\perp} B_t  = [-e_1,e_1] \oplus_p [-e_1,e_1] = 2^{\frac{1}{p}} P_{e_2^\perp} A. 
    \end{equation}
    Since $B_t = \begin{pmatrix}
            1&1\\
            0&t
        \end{pmatrix} A$,  $|B_t| = t |A|$. Therefore 
        \[
            \left(\frac{|A|}{\left|P_{e_2^{\perp}} A\right|}\right)^p+\left(\frac{|B_t|}{\left|P_{e_2^{\perp}} B_t\right|}\right)^p = \left(\frac{|A|}{\left|P_{e_2^{\perp}} A\right|}\right)^p \left( 1+ \frac{t^p}{2} \right).
        \]
    We recall that for any linear transformation $T$, $T(K \oplus_p L) =(TK) \oplus_p (TL)$. Next, using \eqref{eq:PB}, we obtain
    \[
        P_{e_2^{\perp}} (A \oplus_p B_t) = P_{e_2^{\perp}} A \oplus_p P_{e_2^{\perp}} B_t = P_{e_2^{\perp}} A \oplus_p (2^{\frac{1}{p}} P_{e_2^{\perp}} A) = 3^{\frac{1}{p}} P_{e_2^{\perp}} A.
    \]
    We claim that 
    \begin{equation} \label{eq:claim-lp-2}
        |A \oplus_p B_t| = |A \oplus_p B_0| + o(t^p).
    \end{equation}
    Let us first explain why this claim gives the desired planar counterexample. Observe that
    \begin{align}
        A \oplus_p B_0 &= [-e_1,e_1] \oplus_p [-e_2,e_2]\oplus_p [-e_1,e_1] \oplus_p [-e_1,e_1] = 3^{\frac{1}{p}}[-e_1,e_1] \oplus_p [-e_2,e_2]  
        \\
        &= \begin{pmatrix}
        3^{\frac{1}{p}}&0\\
        0&1
    \end{pmatrix} A.
    \end{align}
    Since the determinant of the matrix is $3^{\frac{1}{p}}$, $|A \oplus_p B_0| = 3^{\frac{1}{p}}|A|$. Thus,
    \[
        \left(\frac{\left|A \oplus_p B_t\right|}{\left|P_{e_2^{\perp}}\left(A \oplus_p B_t\right)\right|}\right)^p = \frac{\bigg(3^{\frac{1}{p}}|A| + o(t^p)\bigg)^p}{3|P_{e_2^{\perp}} A|^p} = \left(\frac{|A|}{\left|P_{e_2^{\perp}} A\right|}\right)^p + o(t^p).
    \]
    For sufficiently small $t$, we have
    \begin{align}
        \left( \frac{\left|A \oplus_p B_t\right|} {\left|P_{e_2^{\perp}}\left(A \oplus_p B_t\right)\right|} \right)^p 
        &=\left(\frac{|A|}{\left|P_{e_2^{\perp}} A\right|}\right)^p + o(t^p) < \left(\frac{|A|}{\left|P_{e_2^{\perp}} A\right|}\right)^p \left( 1+ \frac{t^p}{2} \right) 
        \\
        &=
        \left(\frac{|A|}{\left|P_{e_2^{\perp}} A\right|}\right)^p+\left(\frac{|B_t|}{\left|P_{e_2^{\perp}} B_t\right|}\right)^p.
    \end{align}
    This contradicts the inequality \eqref{eq:question1} in Question~\ref{ques:ratio-Lp-zonoid}. Hence, Question~\ref{ques:ratio-Lp-zonoid} in $\R^2$ has a negative answer. 

    We now prove the claim \eqref{eq:claim-lp-2}. For \(t>0\), define
\[
    T_t=
    \begin{pmatrix}
        1&0\\
        \frac{t}{3}&1
    \end{pmatrix},
    \qquad
    K_t=T_t^{-1}(A\oplus_pB_t).
\]
Thus, $ |K_t|=|A\oplus_pB_t|. $ Since 
\[
\begin{aligned}
    A\oplus_pB_t
    &=
    2^{1/p}[-e_1,e_1]
    \oplus_p[-e_2,e_2]\oplus_p
    [-(e_1+te_2),e_1+te_2],
\end{aligned}
\]
we obtain
\[
\begin{aligned}
    K_t&=  \begin{pmatrix}
        1&0\\
        -\frac{t}{3}&1
    \end{pmatrix}
    2^{1/p}[-e_1,e_1]
    \oplus_p[-e_2,e_2]\oplus_p
    [-(e_1+te_2),e_1+te_2]
    \\
    &=
    2^{1/p}
    \left[
        -\left(e_1-\frac{t}{3}e_2\right),
        e_1-\frac{t}{3}e_2
    \right]
    \oplus_p[-e_2,e_2]\oplus_p
    \left[
        -\left(e_1+\frac{2t}{3}e_2\right),
        e_1+\frac{2t}{3}e_2
    \right].
\end{aligned}
\]
Therefore, for \(u=(x,y)\in\mathbb S^1\),
\[
    h_{K_t}^p(u)
    =
    2\left|x-\frac{ty}{3}\right|^p
    +
    \left|x+\frac{2ty}{3}\right|^p
    +
    |y|^p.
\]
When \(t=0\), we have $K_0=A\oplus_pB_0$ and $h_{K_0}^p(u)=3|x|^p+|y|^p$.
Consequently,
\begin{equation}
    \label{eq:support-p-difference}
\begin{aligned}
    h_{K_t}^p(u)-h_{K_0}^p(u)
    &=
    2\left|x-\frac{ty}{3}\right|^p
    +
    \left|x+\frac{2ty}{3}\right|^p
    -
    3|x|^p.
\end{aligned}
\end{equation}
Since \(s\mapsto |s|^p\) is convex, we have
\(
    3|x|^p
    \leq
    2\left|x-\frac{ty}{3}\right|^p
    +
    \left|x+\frac{2ty}{3}\right|^p.
\)
It follows that
\begin{equation}
    \label{eq:support-p-positive}
    h_{K_t}^p(u)-h_{K_0}^p(u)\geq0
\end{equation}
and $ h_{K_t}(u)\geq h_{K_0}(u).$ 
Thus, $K_0 \subset K_t$ and 
\begin{equation}\label{lowebound}
    |A\oplus_pB_t|=|K_t|\ge |K_0| = |A\oplus_pB_0|.
    \end{equation}
We next prove an upper bound. 
Using Lemma \ref{lem:p-taylor-remainder}, we have
$$
        |r+s|^p \le |r|^p
        +p|r|^{p-1}\operatorname{sgn}(r)s+ 2^{2-p}|s|^p.
$$
We apply the above inequality to  \eqref{eq:support-p-difference}, first with
$
    (r,s)=(x,-\frac{ty}{3}),
$
and then with
    $(r,s)=(x,\frac{2ty}{3}).
$
It follows that
\[
 h_{K_t}^p(u)-h_{K_0}^p(u)\leq
    2^{2-p}\left(
        2\left|\frac{ty}{3}\right|^p
        +
        \left|\frac{2ty}{3}\right|^p
    \right)=
    \frac{2^{2-p}(2+2^p)}{3^p}|y|^pt^p\leq
    \frac{2^{2-p}(2+2^p)}{3^p}t^p.
\]
Thus,
\begin{equation}
    \label{eq:support-p-estimate}
    h_{K_t}^p(u)-h_{K_0}^p(u)
    \leq
    \frac{2^{2-p}(2+2^p)}{3^p}t^p
\end{equation}
uniformly on \(\mathbb S^1\).

We also need a pointwise estimate. Fix
\(u=(x,y)\in\mathbb S^1\) with \(x\neq0\), i.e. $u\not\in \{-e_2, e_2\}$. The function
\(s\mapsto |s|^p\) is twice continuously differentiable in a
neighborhood of \(x\), and
\[
    \frac{d^2}{ds^2}|s|^p
    =
    p(p-1)|s|^{p-2}.
\]
The second-order Taylor expansion at \(x\) gives
\[
\begin{aligned}
    2\left|x-\frac{ty}{3}\right|^p
    +
    \left|x+\frac{2ty}{3}\right|^p
    -
    3|x|^p
    &=
    \frac{p(p-1)}{3}|x|^{p-2}y^2t^2
    +o_u(t^2).
\end{aligned}
\]
In particular,
\[
    h_{K_t}^p(u)-h_{K_0}^p(u)=O_u(t^2).
\]
The condition \(x\neq0\) excludes only the two points
\((0,1)\) and \((0,-1)\) of \(\mathbb S^1\). Therefore,
\begin{equation}
    \label{eq:support-p-limit}
    \frac{h_{K_t}^p(u)-h_{K_0}^p(u)}{t^p}
    \longrightarrow0
    \qquad\text{ for }u\in\mathbb S^1 \setminus \{-e_2, e_2\}.
\end{equation}
Set
\[
    \delta_t=h_{K_t}-h_{K_0}.
\]
It follows from \eqref{eq:support-p-positive} that \(\delta_t\geq0\).  By concavity, the graph of the function $s \mapsto s^{1 / p}$ lies below its tangent line at $s=h_{K_0}^p(u)$. Therefore,
\[
\begin{aligned}
    \delta_t(u)
    &=
    \bigl(h_{K_t}^p(u)\bigr)^{1/p}
    -
    \bigl(h_{K_0}^p(u)\bigr)^{1/p}
    \leq
    \frac{1}{p}
    \bigl(h_{K_0}^p(u)\bigr)^{\frac1p-1}
    \bigl(h_{K_t}^p(u)-h_{K_0}^p(u)\bigr)
    \\
    &\leq
    \frac{1}{p}
    \bigl(h_{K_t}^p(u)-h_{K_0}^p(u)\bigr),
\end{aligned}
\]
where the second inequality follows from 
$
    h_{K_0}^p(u)\geq
    1 \mbox{ for all } u=(x,y)\in\mathbb S^1.
$
It follows from \eqref{eq:support-p-estimate} that
\begin{equation}
    \label{eq:delta-estimate}
    0
    \leq
    \delta_t(u)
    \leq
    \frac{2^{2-p}(2+2^p)}{p\,3^p}t^p
\end{equation}
uniformly on \(\mathbb S^1\). Moreover,
\eqref{eq:support-p-limit} implies
\begin{equation}
    \label{eq:delta-limit}
    \frac{\delta_t(u)}{t^p}
    \longrightarrow0
    \qquad\text{ for  }u\in\mathbb S^1 \setminus \{-e_2, e_2\}.
\end{equation}
Set
\[
    I_t
    =
    \int_{\mathbb S^1}\delta_t(u)\,dS_{K_0}(u).
\]
The body \(K_0\) is strictly convex; indeed, it is a linear image of
the unit ball of \(\ell_q^2\), where
$
    q>2.$  Therefore,
$ 
    S_{K_0}\bigl(\{e_2,-e_2\}\bigr)=0.
$
Consequently, \eqref{eq:delta-limit} holds
\(S_{K_0}\)-almost everywhere. Moreover, by
\eqref{eq:delta-estimate},
\[
    0
    \leq
    \frac{\delta_t(u)}{t^p}
    \leq
    \frac{2^{2-p}(2+2^p)}{p\,3^p}
\]
uniformly on \(\mathbb S^1\). Since \(S_{K_0}\) is a finite measure,
the dominated convergence theorem gives
\[
    I_t=o(t^p).
\]
Since \(h_{K_t}=h_{K_0}+\delta_t\), we obtain
\[
\begin{aligned}
    V(K_0,K_t)
    &=
    \frac12
    \int_{\mathbb S^1}
        h_{K_t}(u)\,dS_{K_0}(u)=
    \frac12
    \int_{\mathbb S^1}
        \bigl(h_{K_0}(u)+\delta_t(u)\bigr)
        \,dS_{K_0}(u)=
    |K_0|+\frac12 I_t.
\end{aligned}
\]
Minkowski's first inequality therefore gives
\[
    |K_0|\,|K_t|
    \leq
    V(K_0,K_t)^2
    =
    \left(
        |K_0|+\frac12 I_t
    \right)^2.
\]
Since \(|K_0|>0\), it follows that
\[
\begin{aligned}
    |K_t|
    &\leq
    \frac{
        \left(|K_0|+\frac12 I_t\right)^2
    }{|K_0|}
    =
    |K_0|+I_t+\frac{I_t^2}{4|K_0|}.
\end{aligned}
\]
Since  \(I_t=o(t^p)\) and \eqref{lowebound}, we  have $|K_t|-|K_0|=o(t^p)$.
This proves \eqref{eq:claim-lp-2}, and the proof of the theorem is complete in $\R^2$.

    We extend this counterexample to higher dimensions. Let $A,B$ be the $L_p$-zonotopes in $\R^2$ that give the negative answer to Question \ref{ques:ratio-Lp-zonoid}. Consider $A' = A \oplus_p [-e_3,e_3]\oplus_p \ldots \oplus_p [-e_n,e_n]$ and $B' =B \oplus_p [-e_3,e_3]\oplus_p \ldots \oplus_p[-e_n,e_n]$. Using \cite[Lemma 8]{FMMN-26-2} and \cite[Lemma  15]{MNZ-25}, we have $A' = A \oplus_p (\{0\}^2 \times B_q^{n-2})$ where $q$ is  the H\"older conjugate of $p$, that is $\frac{1}{p} + \frac{1}{q} = 1$. Thus,
    \[
        \frac{|A'|}{|P_{e_2^\perp}A'|} = \frac{\frac{\Gamma\left(1+\frac{2}{q}\right) \Gamma\left(1+\frac{n-2}{q}\right)}{\Gamma\left(1+\frac{n}{q}\right)}|A||B_q^{n-2}|}{\frac{\Gamma\left(1+\frac{1}{q}\right) \Gamma\left(1+\frac{n-2}{q}\right)}{\Gamma\left(1+\frac{n-1}{q}\right)}|P_{e_2^\perp}A||B_q^{n-2}|}
        =
        \frac{\Gamma\left(1+\frac{2}{q}\right) \Gamma\left(1+\frac{n-1}{q}\right)}{\Gamma\left(1+\frac{1}{q}\right) \Gamma\left(1+\frac{n}{q}\right)} \frac{|A|}{|P_{e_2^\perp}A|}= c_{n,q} \frac{|A|}{|P_{e_2^\perp}A|}.
    \]
    Therefore, the same computation gives 
    $$\frac{|B'|}{|P_{e_2^\perp}B'|} = c_{n,q} \frac{|B|}{|P_{e_2^\perp}B|} \qquad\mbox{ and }\qquad \frac{|A'\oplus_p B'|} {|P_{e_2^\perp}(A'\oplus_p B')|} = c_{n,q} \frac{|A\oplus_p B|} {|P_{e_2^\perp}(A\oplus_p B)|}. 
    $$
    The common factor $c_{n,q}$ cancels, and the result follows from the planar counterexample.
    \end{proof}

\subsection{The planar weak \texorpdfstring{$L_p$}{Lp} projection inequality}
\label{sec:planar-one-term-lp}

 We use H\"older estimates for maps closely related to the normalized duality map on $L_p$-spaces. These have been studied in several settings; see, for example, \cite[Chapter 9]{BL-00} for estimates between unit spheres of $L_p$-spaces, \cite{Ricard-15} for their noncommutative analogues. A result more directly comparable to the estimate needed here was proved by Carlen, Frank, and Lieb for the normalized duality map \cite[Lemma 3.3]{CFL-14}. We use the following sharp estimate in the real finite-dimensional setting, which generalizes the result of \cite{DW-64}.

\begin{lemma}
\label{lem:mazur-map-section}
Let $1<p\leq2$ and let $q$ be  the H\"older conjugate of $p$, that is $\frac{1}{p} + \frac{1}{q} = 1$. For a nonzero vector $u\in\R^m$, define
\[
    J(u)=\frac{(|u_1|^{p-2}u_1,\ldots,|u_m|^{p-2}u_m)}{\|u\|_p^{p-1}}
\]
to be the vector in $\partial B_q^m$ such that $\langle J(u),u\rangle=\|u\|_p$.
Then, for nonzero $u,v\in\R^m$,
\begin{equation}
\label{eq:mazur-map-estimate-section}
    \|J(u)-J(v)\|_q
    \leq
    2\left(
       \frac{\|u-v\|_p}{\|u\|_p+\|v\|_p}
    \right)^{p-1}.
\end{equation}
\end{lemma}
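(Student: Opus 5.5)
The proof uses two invariances of $J$ and then reduces to a coordinatewise Hölder estimate. Write $\Phi(u)=(\phi(u_1),\ldots,\phi(u_m))$ with $\phi(r)=|r|^{p-1}\operatorname{sgn}(r)$ as in the proof of Lemma~\ref{lem:p-taylor-remainder}.

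\textbf{Basic facts.} Since $(p-1)q=p$, we have $\|\Phi(u)\|_q=\|u\|_p^{p-1}$ and $J(u)=\Phi(u)/\|u\|_p^{p-1}$. Moreover, $J$ is invariant under positive scalings, $J(\lambda u)=J(u)$ for $\lambda>0$. The right-hand side of \eqref{eq:mazur-map-estimate-section} is homogeneous of degree $0$, so I will normalize $\|u\|_p+\|v\|_p=2$. Since $\|J(u)-J(v)\|_q\le 2$ trivially, I may also assume $\|u-v\|_p<\|u\|_p+\|v\|_p$.

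\textbf{Step 1: vector Hölder bound.} The proof of Lemma~\ref{lem:p-taylor-remainder} gives $|\phi(a)-\phi(b)|\le 2^{2-p}|a-b|^{p-1}$. Raising to the power $q$ and summing over coordinates gives
\[
\|\Phi(x)-\Phi(y)\|_q\le 2^{2-p}\|x-y\|_p^{p-1}.
\]
If $\|x\|_p=\|y\|_p=\rho$, dividing by $\rho^{p-1}$ yields $\|J(x)-J(y)\|_q\le 2^{2-p}(\|x-y\|_p/\rho)^{p-1}$.

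\textbf{Step 2: common norm.} Rescale $u,v$ to a common norm and apply Step 1. Taking $\rho$ to be the average $m=(\|u\|_p+\|v\|_p)/2$, set $u'=mu/\|u\|_p$ and $v'=mv/\|v\|_p$. Then $u'-v'=(u-v)+(m/\|u\|_p-1)u-(m/\|v\|_p-1)v$. This only gives $\|u'-v'\|_p\le 2\|u-v\|_p$, leading to the constant $2^{2-p}\cdot 4^{p-1}=2^p$ instead of $2$.

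\textbf{Main obstacle: the constant.} The two losses do not occur simultaneously. The factor $2^{2-p}$ in Step 1 is attained only when $a=-b$ coordinatewise, that is for antipodal configurations with $u\approx -v$. The factor $2$ in the norm comparison is attained only when $u,v$ are nearly parallel with different lengths. So the two estimates must be coupled rather than multiplied.

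I would first prove the sharp bound directly for unit vectors, in the form
\[
\|J(x)-J(y)\|_q\le 2\left(\tfrac{\|x-y\|_p}{2}\right)^{p-1}.
\]
For this I would use the sharp scalar inequality
\[
|\phi(a)-\phi(b)|^q\le 2^{q(2-p)}|a-b|^{p}
\]
in a form refined by a term like $(|a|^p+|b|^p)$, which is interpolated Clarkson-type. Summed under the constraint $\sum|a_i|^p=\sum|b_i|^p=1$, this should yield the sharp unit-sphere bound with constant $2^{2-p}$ against $\|x-y\|_p/2^{\ldots}$.

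Afterwards I would treat the radial part by a one-variable optimization in $\|u\|_p/\|v\|_p$. The model is Dunkl--Williams \cite{DW-64}: for $p=2$, $J(u)=u/\|u\|$, and the estimate is exactly the Hilbert-space Dunkl--Williams inequality with constant $2$. The reduction should therefore be patterned on its proof, namely reflecting $v$ to $\|u\|_p v/\|v\|_p$ and using the angle bisector. Carrying that coupling through for $p<2$ is the step I expect to be hardest. I am not certain that the refined scalar inequality exists in the required form; if it fails, I would instead compare with the extremal two-dimensional configurations.
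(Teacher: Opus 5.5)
\textbf{There is a genuine gap.} Your proposal is not yet a proof. The only case you actually settle is the equal-norm case, and your Step 1 already does that sharply: for $\|x\|_p=\|y\|_p=1$ it gives $\|J(x)-J(y)\|_q\le 2^{2-p}\|x-y\|_p^{p-1}=2\bigl(\|x-y\|_p/2\bigr)^{p-1}$. So the refined Clarkson-type inequality you look for on the unit sphere is not needed. The whole content of the lemma is the unequal-norm case, and the route you suggest for it fails.

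Passing from $u,v$ to $x=u/\|u\|_p$, $y=v/\|v\|_p$ after applying the unit-sphere bound requires the Dunkl--Williams inequality $\|x-y\|_p\le 2\|u-v\|_p/(\|u\|_p+\|v\|_p)$ with constant $2$. That inequality is false in $\ell_p^m$ for $p\neq 2$; constant $2$ characterizes inner product spaces. Concretely, take $u=e_1$ and $v=\lambda y$ with $y=(1/2,(1-2^{-p})^{1/p})$. The inequality then reads $\frac{1+\lambda}{2}\|e_1-y\|_p\le\|e_1-\lambda y\|_p$, with equality at $\lambda=1$. Matching derivatives at $\lambda=1$ forces $\langle J(e_1-y),e_1+y\rangle=0$. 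But this pairing equals $\|e_1-y\|_p^{1-p}(2^{2-p}-1)>0$. Composing the sharp unit-sphere bound with the true Dunkl--Williams constant $C>2$ of $\ell_p^m$ therefore only gives the factor $2(C/2)^{p-1}>2$. Likewise, a ``one-variable optimization in $\|u\|_p/\|v\|_p$'' is not available, since $\|u-v\|_p$ is not determined by the norms together with $\|x-y\|_p$.

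\textbf{The missing idea} is to do the coupling coordinatewise, before summing. The paper proves that for $\lambda,\mu\ge 0$ with $\lambda+\mu=1$ and all $s,t\in\R$,
\[
|\phi(s)-\phi(t)|^q\le 2^q|\lambda s-\mu t|^p+2^{q-1}(\mu-\lambda)\bigl(|s|^p-|t|^p\bigr).
\]
\begin{itemize}
\item At $\lambda=\mu=\frac12$ this is exactly your Step 1 inequality raised to the power $q$, because $q(2-p)=q-p$. The weights are what encode the radial part.
\item The correction term involves the difference $|s|^p-|t|^p$, not the sum $|a|^p+|b|^p$ you guessed. This is what makes it cancel after summation.
\item Apply it with $s=u_i/\|u\|_p$, $t=v_i/\|v\|_p$ and $\lambda=\|u\|_p/(\|u\|_p+\|v\|_p)$. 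Then $\lambda s-\mu t=(u_i-v_i)/(\|u\|_p+\|v\|_p)$. Summing over $i$, the correction term becomes $(\mu-\lambda)(1-1)=0$, and the lemma follows at once.
\end{itemize}

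The scalar inequality is proved as follows. First reduce to $t=1\le|s|$, using the symmetry $(s,\lambda)\leftrightarrow(t,\mu)$, the sign change $(s,t)\mapsto(-s,-t)$, and homogeneity of degree $p$. Next minimize the right-hand side, which is convex in $\lambda$, at its explicit critical point. Finally, verify a one-variable inequality separately for $s>1$ and for $s<-1$, using Jensen/Hermite--Hadamard and H\"older.
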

Here, we use the convention that $|0|^{p-2}0 = 0$.
The proof of Lemma~\ref{lem:mazur-map-section} is given in
Appendix~\ref{app:duality-map}.

\begin{lemma}
\label{lem:chord-speed-planar-section}
Let $1<p\leq2$ and let $q$ be  the H\"older conjugate of $p$. Let $A\subset\R^2$ be a planar $L_p$-zonoid with $P_{\R e_1}A=[-e_1,e_1]$. For $x\in[-1,1]$, let
\[
    L(x)=\big|\{y\in\R:(x,y)\in A\}\big|.
\]
Then, the function
\begin{equation}
\label{eq:chord-speed-monotone}
    x\mapsto \frac{L(x)}{(1-x^q)^{1/q}}
\end{equation}
is non-increasing on $[0,1)$.
\end{lemma}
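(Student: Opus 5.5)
The plan is to realize $A$ as a linear image of an $\ell_q$-ball, parametrize the upper and lower boundary arcs of $A$ by the normalized duality map $J$, and reduce the monotonicity of \eqref{eq:chord-speed-monotone} to a differential inequality that should follow from Lemma~\ref{lem:mazur-map-section}.

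\emph{Reduction to an $\ell_q$-ball.} By approximation (chord lengths and the normalization $P_{\R e_1}A=[-e_1,e_1]$ behave continuously under Hausdorff limits), we may assume $A$ is an $L_p$-zonotope $[-w_1,w_1]\oplus_p\cdots\oplus_p[-w_m,w_m]$. Then $h_A(\xi)=\|T^*\xi\|_p$ with $T=(w_1\,\cdots\,w_m)\colon\R^m\to\R^2$, so $A=T(B_q^m)$. Write $a,b\in\R^m$ for the two rows of $T$. The normalization $P_{\R e_1}A=[-e_1,e_1]$ means $h_A(e_1)=\|a\|_p=1$. A perturbation (adding small multiples of extra segments, then letting them tend to zero) lets us assume $A$ is smooth and strictly convex, so all maps below are differentiable.

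\emph{Boundary parametrization.} The point of $A$ with outer normal $(-\lambda,1)$ is $T\,J(b-\lambda a)$, since $J(u)$ maximizes $\langle u,z\rangle$ over $B_q^m$. Hence the upper boundary over abscissa $x$ is attained at $\lambda_+(x)$ with $\langle a,J(b-\lambda_+a)\rangle=x$, and the upper height is $g_+(x)=\langle b,J(b-\lambda_+a)\rangle$. Likewise the lower boundary uses normal $(\mu,-1)$, giving the point $-TJ(\mu a-b)=T J(b-\mu a)$ with parameter $\lambda_-(x)$. The slopes of the boundary arcs are the normal parameters: $g_+'=\lambda_+$ and $g_-'=\lambda_-$. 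Set $u=b-\lambda_+a$ and $v=b-\lambda_-a$. Then
\[
L(x)=\langle b,J(u)-J(v)\rangle=\langle u,J(u)-J(v)\rangle=\|u\|_p-\langle u,J(v)\rangle ,
\]
because $\langle a,J(u)\rangle=\langle a,J(v)\rangle=x$. Symmetrically, $L(x)=\langle v,J(v)-J(u)\rangle$ (note the sign: $L=-\langle v,J(u)-J(v)\rangle$, and the orientation $u$ above, $v$ below gives $L\ge0$). Moreover $u-v=(\lambda_--\lambda_+)a$, so $\|u-v\|_p=\lambda_--\lambda_+$ because $\|a\|_p=1$.

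\emph{Differential inequality.} Monotonicity of \eqref{eq:chord-speed-monotone} on $[0,1)$ is equivalent to
\[
(\lambda_--\lambda_+)(1-x^q)\ \ge\ x^{q-1}L(x).
\]
The model case $A=B_q^2$ gives equality, which suggests the inequality is sharp and must come from a sharp estimate such as \eqref{eq:mazur-map-estimate-section}. The intended route is as follows.
\begin{itemize}[leftmargin=2em]
\item Upper-bound $L$ using the two expressions above, which give $2L=\langle u-v,J(u)-J(v)\rangle\le \|u-v\|_p\,\|J(u)-J(v)\|_q$ by H\"older.
\item Insert Lemma~\ref{lem:mazur-map-section} to obtain $2L\le 2\|u-v\|_p^{p}/(\|u\|_p+\|v\|_p)^{p-1}$.
\item Bring in $x$: the functional $a\in B_p^m$ satisfies $\langle a,J(u)\rangle=\langle a,J(v)\rangle=x$. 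Since $\|J(u)\|_q=\|J(v)\|_q=1$, the component of $J(u),J(v)$ "orthogonal" to $a$ has $\ell_q$-size controlled by $(1-x^q)^{1/q}$. This should be quantified through the decomposition $u=\|u\|_p\cdot(\text{component along }a)+\dots$.
\end{itemize}
The exponents must be matched so that $\|u-v\|_p^{p-1}$ and $(\|u\|_p+\|v\|_p)^{p-1}$ combine into exactly $(1-x^q)/x^{q-1}$, presumably after also using $x\le \min(\|u\|_p,\|v\|_p)^{-1}\langle\,\cdot\,\rangle$-type relations that express $x$ through $\langle a,J(u)\rangle$.

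\emph{Main obstacle.} The hardest step is the last one: converting the H\"older/duality-map bound into the precise factor $x^{q-1}/(1-x^q)$ with no loss, since the $\ell_q^2$-ball shows there is no room. If the direct chaining loses a constant, I would first try a two-dimensional reduction. Restrict attention to the plane spanned by $u$ and $v$ (equivalently, by $a$ and $b$) in $\R^m$, and check that the extremal configuration is the one where $J(u)$ and $J(v)$ are symmetric with respect to $a$. In that configuration equality in Lemma~\ref{lem:mazur-map-section} (which is attained for antipodal-type pairs) should correspond exactly to the $\ell_q^2$-ball case.
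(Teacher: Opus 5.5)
There is a genuine gap, and it starts with a sign error in the lower boundary. The point of $A$ with outer normal $(\lambda_-,-1)$ is $TJ(\lambda_-a-b)=-TJ(v)$, because $J$ is odd. It is not $TJ(v)$, which is the antipodal point lying on the upper arc over $-x$. Consequently $\langle a,J(v)\rangle=-x$ rather than $x$, the lower height is $-\langle b,J(v)\rangle$, and $L=\langle b,J(u)+J(v)\rangle$. Your identity $2L=\langle u-v,J(u)-J(v)\rangle$ is therefore false. Under your own relations $\langle a,J(u)\rangle=\langle a,J(v)\rangle=x$ and $u-v=(\lambda_--\lambda_+)a$, its right-hand side is $0$. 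With the correct signs it equals $2x\|u-v\|_p$. In the model case $A=B_q^2$, your formula $L=\langle b,J(u)-J(v)\rangle$ gives $L=0$. So the H\"older-plus-Lemma chain in your first two steps controls $x$, not $L$. The ``main obstacle'' you describe, and the vague third step about components of $J(u),J(v)$ orthogonal to $a$, are artifacts of this error. As written, the decisive step of the proof is missing.

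The missing idea is the tangent-intercept identity. The supporting line of the upper arc at $x$ has normal $(-\lambda_+,1)$, so $g_+(x)-xg_+'(x)=h_A(-\lambda_+,1)=\|u\|_p$. Likewise $-(g_-(x)-xg_-'(x))=h_A(\lambda_-,-1)=\|v\|_p$. Hence $L-xL'=\|u\|_p+\|v\|_p$, while $-L'=\lambda_--\lambda_+=\|u-v\|_p$. Your target $-L'(1-x^q)\ge x^{q-1}L$ is equivalent to $-L'\ge x^{q-1}(L-xL')$, that is,
\[
\|u-v\|_p\ \ge\ x^{q-1}\bigl(\|u\|_p+\|v\|_p\bigr).
\]
By H\"older (using $\|a\|_p=1$) and Lemma~\ref{lem:mazur-map-section},
\[
2x=\langle a,J(u)-J(v)\rangle\le\|J(u)-J(v)\|_q\le 2\bigl(\|u-v\|_p/(\|u\|_p+\|v\|_p)\bigr)^{p-1}.
\]
Raising this to the power $q-1=1/(p-1)$ gives exactly the displayed inequality, with no loss and no two-dimensional reduction. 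This is the paper's argument. It uses your parametrization, phrased through the Legendre transform of $H(s)=\|sa+b\|_p$ with $s_\pm=-\lambda_\pm$. There, $L-xL'=H(s_+)+H(s_-)$ follows from $L=-H^*(x)-H^*(-x)$ together with $(H^*)'(\pm x)=s_\pm$.
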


\begin{proof}
Since $L$ is concave, it is differentiable almost everywhere. To prove the monotonicity, it is enough to prove at points where $L$ is differentiable that
\begin{equation}
\label{eq:chord-speed-differential}
    -L'(x)\geq x^{q-1}\bigl(L(x)-xL'(x)\bigr),\qquad 0<x<1.
\end{equation}
Indeed, this inequality implies that the derivative of the function \eqref{eq:chord-speed-monotone} is non-positive almost everywhere. Since that function is locally absolutely continuous on $(0,1)$, the desired monotonicity follows.
By approximation, it is enough to prove the assertion for
$L_p$-zonotopes. Thus, we write
\[
A=[-v_1,v_1] \oplus_p \cdots \oplus_p [-v_m,v_m], \qquad v_i=(a_i,b_i),
\]
and set $ a=(a_i)_{i=1}^m, b=(b_i)_{i=1}^m$. If $a,b$ are linearly dependent, then $v_1,\ldots,v_m$ all lie in a one-dimensional subspace. Since $P_{\R e_1}A=[-e_1,e_1]$, each vertical section of $A$ has length zero. Hence, $L(x) = 0$ for $x \in [0,1)$, and the desired monotonicity is trivial. We therefore assume that $a,b$ are linearly independent.
Then,
\[
    h_A(s,t)= \left(\sum_{i=1}^m |a_is+b_it|^p \right)^{1/p} = 
    \|sa+tb\|_p.
\]
Here, the norm is taken in $\R^m$.
Since $P_{\R e_1}A=[-e_1,e_1]$, the largest first coordinate of a point of $A$ is 1. Therefore
\begin{equation} \label{eq:norm-of-a}
    \|a\|_p = h_A(1,0) = \sup_{(u,v) \in A} u = 1.
\end{equation}
Define
$$
    H(s):=h_A(s,1)=\|sa+b\|_p.
$$
Recall the Legendre transform of $H$,
\[
    H^*(x):=\sup _{s \in \mathbb{R}}\{s x-H(s)\}.
\]
The function defined by
\[
    x \mapsto -\max\{y:(x,y)\in A\}, \qquad x \in [-1,1]
\]
and extended by $+\infty$ outside $[-1,1]$, is a proper closed convex function.
Thus,
\begin{align}
\sup _{x \in[-1,1]}(s x+\max \{y:(x, y) \in A\})  =\sup _{(x, y) \in A}(s x+y) 
& =h_A(s, 1) 
 =H(s) .
\end{align}
Hence, 
$
    \max \{y:(x, y) \in A\}=-H^*(x).
$
Since $A$ is origin-symmetric, 
\begin{equation} \label{eq:l-formula}
    L(x) = -H^*(x) - H^*(-x).
\end{equation}
Now fix $0 < x< 1$. Consider
\begin{equation} \label{eq:Legen-H}
     -H^*(x)= \inf _{s \in \mathbb{R}}\{H(s)-s x\}\quad
\text { and }\quad - H^*(-x) =\inf _{s \in \mathbb{R}}\{H(s)+s x\} .
\end{equation}
We show that in each infimum, there is a unique minimizer. Using the triangle inequality and \eqref{eq:norm-of-a}, we obtain 
$$
    H(s)=\|s a+b\|_p \geq |s|-\|b\|_p .
$$
Consequently,
$
H(s) \pm s x \geq(1-x)|s|-\|b\|_p .
$
Since $0<x<1$, we have
$
H(s)\pm s x \longrightarrow+\infty 
$
as $|s| \rightarrow \infty$. Hence, both functions attain their minimum. 
We next prove the uniqueness. For any distinct $s_1 \neq s_2$ and $0<\lambda<1$, we have
$$
H\left(\lambda s_1+(1-\lambda) s_2\right) =\left\|\lambda\left(s_1 a+b\right)+(1-\lambda)\left(s_2 a+b\right)\right\|_p \leq \lambda H\left(s_1\right)+(1-\lambda) H\left(s_2\right).
$$
If equality holds, there exists $c>0$ such that
$
s_1 a+b=c\left(s_2 a+b\right) .
$
Thus,
$$
\left(s_1-c s_2\right) a+(1-c) b=0 .
$$
Since $a$ and $b$ are linearly independent, this implies
$
c=1 $ and  $s_1=s_2,
$
which is a contradiction. Therefore, $H$ is strictly convex. Adding or subtracting a linear function preserves strict convexity. Hence, each of the two infima has exactly one minimizer. Denote them by
\[
    s_{+}=s_{+}(x)=\underset{s \in \mathbb{R}}{\operatorname{argmin}}\{H(s)-s x\}
    \quad \text { and }\quad
    s_{-}= s_{-}(x)=\underset{s \in \mathbb{R}}{\operatorname{argmin}}\{H(s)+s x\} .
\]
Equivalently,
\begin{equation} \label{eq:Legre-H-s}
-H^*(x)=H\left(s_{+}\right)-x s_{+} \quad \text { and }\quad
-H^*(-x)=H\left(s_{-}\right)+x s_{-} .
\end{equation}
Inserting them into \eqref{eq:l-formula}, we obtain
\begin{equation} \label{eq:l-formula-in-H}
    L(x)=H\left(s_{+}\right)+H\left(s_{-}\right)-x\left(s_{+}-s_{-}\right) .
\end{equation}
Now, we justify differentiability of $H$. For $p>1$, the $\ell_p$-norm is differentiable at every nonzero vector. Since $sa+b \neq 0$ for any $s$, $H$ is differentiable everywhere, with
\begin{equation} \label{eq:formula-H-prime}
H^{\prime}(s)=\frac{\sum_{i=1}^m a_i\left|s a_i+b_i\right|^{p-2}\left(s a_i+b_i\right)}{\|s a+b\|_p^{p-1}} .
\end{equation}
Here, $|0|^{p-2} 0$ is interpreted as $0$. Hence,
\begin{equation} \label{eq:H-prime-s}
    H^{\prime}\left(s_{+}\right)=x, \quad H^{\prime}\left(s_{-}\right)=-x .    
\end{equation}
We claim that
\begin{equation} \label{eq:l-prime-formula}
    L'(x)=-s_+ +s_-.
\end{equation}
Using \eqref{eq:l-formula}, it suffices to show that
$
    \left(H^*\right)^{\prime}(x)=s_{+}, $ and $\left(H^*\right)^{\prime}(-x)=s_{-} .
$
We first prove 
\begin{equation} \label{eq:general-formula}
    \partial H^*(x)=\left\{ \underset{s \in \mathbb{R}}{\operatorname{argmax}}\{x s-H(s)\} \right\}= \{s_+\}.
\end{equation}
For every $y \in \mathbb{R}$, the definition of Legendre transform and \eqref{eq:Legre-H-s} give
$$
H^*(y)  \geq y s_+-H(s_+) 
 =x s_+-H(s_+)+s_+(y-x) 
=H^*(x)+s_+(y-x).
$$
Therefore,  $s_+ \in \partial H^*(x)$. Conversely, suppose that $s \in \partial H^*(x)$. Then, for every $y \in \mathbb{R}$,
$$
H^*(y) \geq H^*(x)+s(y-x) .
$$
Rearranging, we get
$$
s y-H^*(y) \leq s x-H^*(x) .
$$
Taking the supremum over $y \in \mathbb{R}$ and using the fact that $H^{**} = H$, we get 
$
    H(s) \leq s x-H^*(x).
$
On the other hand, the definition of $H^*(x)$ always gives
$
H^*(x) \geq x s-H(s) .
$
Thus, equality holds,
\[
    H^*(x)=x s-H(s).
\]
Hence $s$ attains the maximum in the definition of $H^*(x)$. By uniqueness of the maximizer, we obtain $s=s_{+}$, verifying \eqref{eq:general-formula}.
Since $\partial H^*(x)$ is a singleton, $H^*$ is differentiable at $x$, and
$
\left(H^*\right)^{\prime}(x)=s_{+} .
$ 
The proof of $\left(H^*\right)^{\prime}(-x)=s_{-}$ is similar, so we omit it.
Therefore, we have proved \eqref{eq:l-prime-formula}. Combining this with \eqref{eq:l-formula-in-H}, we get
\[
    L(x)-x L^{\prime}(x)=H\left(s_{+}\right)+H\left(s_{-}\right).
\]
Thus \eqref{eq:chord-speed-differential} is equivalent to
\begin{equation}
\label{eq:delta-needed}
    s_+ - s_-\geq x^{q-1}\bigl(H(s_+)+H(s_-)\bigr).
\end{equation}
Since $H$ is convex, $H^{\prime}$ is non-decreasing. Since
$$
H^{\prime}\left(s_{-}\right)=-x<x=H^{\prime}\left(s_{+}\right),
$$
we must have
$
s_{-}<s_{+} .
$
Set
\[
    u=s_+a+b,
    \qquad
    v=s_-a+b.
\]
Then
\[
    H(s_+)=\|u\|_p,
    \qquad
    H(s_-)=\|v\|_p.
\]
Since $\|a\|_p=1$, we get
\[
    \|u-v\|_p=\|(s_+-s_-) a\|_p = s_+ -s_-.
\]
Let 
\[
    J(w)=\frac{(|w_1|^{p-2}w_1,\ldots,|w_m|^{p-2}w_m)}{\|w\|_p^{p-1}}.
\]
It follows from the formula of $H'$, \eqref{eq:formula-H-prime} that
\[
    H'(s)=\frac{\sum_{i=1}^m a_i\left|s a_i+b_i\right|^{p-2}\left(s a_i+b_i\right)}{\|s a+b\|_p^{p-1}} = \langle J(sa+b),a\rangle.
\]
Using \eqref{eq:H-prime-s}, H\"older's inequality and that $\|a\|_p=1$, we  obtain
\[
    2x=\langle J(u)-J(v),a\rangle
       \leq \|J(u)-J(v)\|_q\|a\|_p
       =\|J(u)-J(v)\|_q.
\]
By Lemma~\ref{lem:mazur-map-section}, we have
\[
    2x \leq \|J(u)-J(v)\|_q \leq 2\left(
       \frac{\|u-v\|_p}{\|u\|_p+\|v\|_p}
    \right)^{p-1} = 
    2\left(
       \frac{s_+-s_-}{\|u\|_p+\|v\|_p}
    \right)^{p-1}.
\]
Since $1/(p-1)=q-1$, we raise both sides to the power $q-1$, and obtain  \eqref{eq:delta-needed}. This implies \eqref{eq:chord-speed-differential}.
\end{proof}

\begin{prop}
\label{prop:horizontal-segment-planar-section}
Let $1<p\leq2$. Let $A\subset\R^2$ be a planar $L_p$-zonoid. For $\delta\geq0$, set
\[
    A_\delta=A\oplus_p[-\delta e_1,\delta e_1].
\]
Then
\[
    \frac{|A_\delta|}{|P_{\R e_1}A_\delta|}
    \geq
    \frac{|A|}{|P_{\R e_1}A|}.
\]
\end{prop}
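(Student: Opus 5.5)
The plan is to compare vertical chord lengths of $A_\delta$ and $A$ pointwise after a horizontal dilation, using the Firey representation of the $L_p$-sum together with Lemma~\ref{lem:chord-speed-planar-section}. We may assume $A$ has nonempty interior, since otherwise $|A|=0$ and there is nothing to prove. The ratio $|K|/|P_{\R e_1}K|$ is invariant under $x\mapsto\lambda x$ acting on the first coordinate. The operation $\oplus_p[-\delta e_1,\delta e_1]$ commutes with this map up to replacing $\delta$ by $\lambda\delta$. Hence we may normalize $P_{\R e_1}A=[-e_1,e_1]$.

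Since $h_{A_\delta}^p(s,t)=h_A^p(s,t)+\delta^p|s|^p$, we get $P_{\R e_1}A_\delta=[-c\,e_1,c\,e_1]$ with $c=(1+\delta^p)^{1/p}$. The claim therefore reduces to $|A_\delta|\geq c|A|$. Let $L$ and $L_\delta$ denote the vertical chord-length functions of $A$ and $A_\delta$. By Fubini it suffices to prove
\[
L_\delta(cx)\geq L(x),\qquad x\in[0,1).
\]
The case $x\in(-1,0]$ then follows by origin symmetry, since $L$ and $L_\delta$ are even.

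To produce chords in $A_\delta$, I will use the standard representation
\[
K\oplus_p M=\bigcup_{\alpha,\beta\ge0,\ \alpha^q+\beta^q=1}(\alpha K+\beta M),
\]
with $q$ the H\"older conjugate of $p$. This follows from $h_{\alpha K+\beta M}=\alpha h_K+\beta h_M\le h_{K\oplus_p M}$ by H\"older, with equality achieved direction by direction. Since $\beta[-\delta e_1,\delta e_1]$ is horizontal, for any $x'\in[0,1)$ and $|r|\le1$ the set $\alpha A+\beta\delta r e_1$ contains a vertical chord of length $\alpha L(x')$ at abscissa $\alpha x'+\beta\delta r$. Thus $L_\delta(\alpha x'+\beta\delta r)\geq\alpha L(x')$.

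The key step is the choice of parameters. Fix $x\in(0,1)$; the case $x=0$ is handled by $\beta=0$. By H\"older there is $(y,\beta)$ with $y,\beta\ge0$, $y^q+\beta^q=x^q$ and $y+\beta\delta=cx$. Put $\alpha=(1-\beta^q)^{1/q}$ and $x'=y/\alpha$; taking $r=1$, the abscissa is exactly $cx$. Then $x'\le x$ is equivalent to $x^q-\beta^q\le x^q(1-\beta^q)$, which holds. Lemma~\ref{lem:chord-speed-planar-section} then gives
\[
\alpha L(x')\geq\alpha L(x)\frac{(1-x'^q)^{1/q}}{(1-x^q)^{1/q}}=L(x)\frac{(\alpha^q-y^q)^{1/q}}{(1-x^q)^{1/q}}=L(x),
\]
because $\alpha^q-y^q=1-\beta^q-(x^q-\beta^q)=1-x^q$. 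This yields $L_\delta(cx)\ge L(x)$.

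The main obstacle was locating this Hölder-matched choice, in which both constraints become equalities simultaneously. The remaining work is checking the union representation and the degenerate endpoints $\beta=0$ and $x'=0$, which I expect to be routine.
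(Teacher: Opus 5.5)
Your proposal is correct and follows essentially the same route as the paper: the same normalization $P_{\R e_1}A=[-e_1,e_1]$, the same pointwise chord comparison $L_\delta(cx)\ge L(x)$ obtained from the Lutwak--Yang--Zhang union representation combined with Lemma~\ref{lem:chord-speed-planar-section}, and the same identity $(1-\beta^q)(1-x'^q)=1-x^q$. Your H\"older-equality choice $y=x(1+\delta^p)^{-1/q}$, $\beta=\delta^{p-1}x(1+\delta^p)^{-1/q}$ is exactly the paper's minimizer $(y_0,s_0)$ of $F(s)=|r_\delta x-\delta s|^q+|s|^q$. You only use the inclusion $\alpha A+\beta\delta r e_1\subset A_\delta$, so your support-function justification is enough, even though on its own it only identifies $A_\delta$ as the convex hull of the union.
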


\begin{proof}
By normalization, it is enough to prove when $P_{\R e_1}A=[-e_1,e_1]$.
Let $
    r_\delta=(1+\delta^p)^{1/p}.$ 
Observe that 
\begin{align}
    P_{\R e_1}A_\delta &=
    P_{\R e_1} ( A \oplus_p [-\delta e_1,\delta e_1])
    = 
    P_{\R e_1} A \oplus_p P_{\R e_1} [-\delta e_1,\delta e_1] 
    \\
    &= [-e_1,e_1] \oplus_p [-\delta e_1,\delta e_1] 
    = [-r_\delta e_1,r_\delta e_1].
\end{align}
The last equality follows directly from the definition of the $L_p$-sum. Indeed,
\begin{align}
h_{\left[-e_1, e_1\right] \oplus_p\left[-\delta e_1, \delta e_1\right]}^p (u) 
&=h_{\left[-e_1, e_1\right]}^p (u)+h_{\left[-\delta e_1, \delta e_1\right]}^p (u)=\left(1+\delta^p\right)\left|\left\langle u, e_1\right\rangle\right|^p 
\\
&= h_{[-(1+\delta^p)^{1/p}e_1,(1+\delta^p)^{1/p}e_1]}^p (u).
\end{align}
For $x \in\left[-r_\delta,r_\delta \right]$, define
$$
L_\delta(x)=\left|\left\{y \in \mathbb{R}:(x, y) \in A_\delta\right\}\right| .
$$
Similarly, for $x \in[-1,1]$, define
$$
L(x)=|\{y \in \mathbb{R}:(x, y) \in A\}| .
$$
We shall prove that
\begin{equation}
\label{eq:pointwise-chord-comparison}
    L_\delta(r_\delta x)\geq L(x),\qquad \text{ for }-1 < x<1.
\end{equation}
This gives  the desired inequality, since
\[
    |A_\delta|
    =\int_{-r_\delta}^{r_\delta}L_\delta(x)\,dx
    =r_\delta\int_{-1}^{1}L_\delta(r_\delta x)\,dx
    \geq r_\delta\int_{-1}^{1}L(x)\,dx
    =r_\delta |A|,
\]
while $|P_{\R e_1}A_\delta|=2r_\delta$ and $|P_{\R e_1}A|=2.$

Now, it remains to prove \eqref{eq:pointwise-chord-comparison}. By symmetry of $A$ and $A_\delta$, it is enough to consider the case $0\leq x < 1$. 
Using  another representation of $L_p$-sum introduced by Lutwak, Yang, and Zhang \cite{LYZ-12}, we have
\[
    A_\delta =  \bigcup_{t \in [0,1]} \left( (1-t)^{1/q} A + t^{1/q}[-\delta e_1,\delta e_1] \right) 
    = 
    \bigcup_{u \in [0,1]} \left( (1-u^q)^{1/q} A + u[-\delta e_1,\delta e_1] \right)  
    .
\]
We claim that
\[
    \bigcup_{u \in [0,1]} \left( (1-u^q)^{1/q} A + u[-\delta e_1,\delta e_1] \right)  
    = 
    \bigcup_{|s|\leq1}
    \left( (1-|s|^q)^{1/q}A+\delta s e_1 \right).
\]
First, let
$$
z \in \bigcup_{u \in[0,1]}\left(\left(1-u^q\right)^{1 / q} A+u\left[-\delta e_1, \delta e_1\right]\right) .
$$
Then, for some $u \in[0,1]$, there exists $s \in[-u, u]$ such that
$
z \in\left(1-u^q\right)^{1 / q} A+\delta s e_1 .
$
Since $|s| \leq u$, we have
$$
\left(1-u^q\right)^{1 / q} \leq\left(1-|s|^q\right)^{1 / q} \text {. }
$$
Because $0 \in A$ and $A$ is convex, this implies
$
\left(1-u^q\right)^{1 / q} A \subset\left(1-|s|^q\right)^{1 / q} A .
$
Therefore,
$$
z \in\left(1-|s|^q\right)^{1 / q} A+\delta s e_1,
$$
and hence $z$ belongs to the right-hand side. Conversely, let
$$
z \in \bigcup_{|s| \leq 1}\left(\left(1-|s|^q\right)^{1 / q} A+\delta s e_1\right).
$$
Choose $u=|s|$. Then, $u \in[0,1]$, and
$
\delta s e_1 \in u\left[-\delta e_1, \delta e_1\right] .
$
Thus,
$$
\left(1-|s|^q\right)^{1 / q} A+\delta s e_1 \subset\left(1-u^q\right)^{1 / q} A+u\left[-\delta e_1, \delta e_1\right] .
$$
Hence $z$ belongs to the left-hand side. This proves the claim. Hence,
\begin{equation}
\label{eq:Lp-sum-union-horizontal}
    A_\delta
    =
    \bigcup_{|s|\leq1}
    (1-|s|^q)^{1/q}A+\delta s e_1.
\end{equation}
Fix $0\leq x<1$. Consider the function
\[
    F(s)=\left|r_\delta x-\delta s\right|^q+|s|^q, \quad s \in \mathbb{R}.
\]
Since $q>1, $ the function $ F$ is strictly convex. Also,
$$
F(s) \geq|s|^q \longrightarrow \infty \quad \text { as }|s| \rightarrow \infty .
$$
Therefore, $F$ has a unique minimizer, say $s_0$. Because $q \geq 2$, the function $t \mapsto|t|^q$ is differentiable, with
$$
\frac{d}{d t}|t|^q=q|t|^{q-2} t.
$$
Hence
$$
F^{\prime}(s)=-q \delta|r_\delta x -\delta s|^{q-2}(r_\delta x-\delta s)+q|s|^{q-2} s .
$$
Set $y_0  = r_\delta x- \delta s_0$.  Then, the critical-point equation gives
\[
    \left|s_0\right|^{q-2} s_0=\delta\left|y_0\right|^{q-2} y_0.
\]
Since $\delta \geq 0$, this equation shows that $s_0$ and $y_0$ have the same sign. Taking absolute values and raising both sides to the power of $\frac{1}{q-1}$ gives
$
\left|s_0\right|=\delta^{1 /(q-1)}\left|y_0\right| .
$
Because $s_0$ and $y_0$ have the same sign,
$$
s_0=\delta^{1 /(q-1)} y_0 .
$$
Using that $y_0  = r_\delta x- \delta s_0$, and $r_\delta=\left(1+\delta^p\right)^{1 / p}$, we obtain
\[
    y_0=\frac{x}{\left(1+\delta^p\right)^{1 / q}}, \qquad 
s_0
=\frac{\delta^{p-1} x}{(1+\delta^p)^{1/q}}.
\]
Thus, $y_0$ and $s_0$ are non-negative with the properties that
$$
s_0^q=\frac{\delta^p x^q}{1+\delta^p} \leq \frac{\delta^p}{1+\delta^p}<1 , \quad \text{and} \quad y_0^q+s_0^q=\frac{x^q}{1+\delta^p}+\frac{\delta^p x^q}{1+\delta^p}=x^q.
$$
Let $$ y = \frac{y_0}{(1-s_0^q)^{1/q}}.$$ Then,
\[
    y^q
    =\frac{y_0^q}{1-s_0^q}
    =\frac{x^q-s_0^q}{1-s_0^q}
    \leq x^q,
\]
so $0 \leq y\leq x$. Also,
\begin{equation}
\label{eq:rho-y-identity}
    (1-s_0^q)(1-y^q)
    =1-s_0^q-y_0^q
    =1-x^q.
\end{equation}
Using \eqref{eq:Lp-sum-union-horizontal}, the set $(1-s_0^q)^{1/q} A+\delta s_0 e_1$ is contained in $A_\delta$. At the horizontal coordinate
\[
    r_\delta x=(1-|s_0|^q)^{1/q} y+\delta s_0,
\]
this set has vertical chord length $(1-s_0^q)^{1/q} L(y)$. Therefore,
\[
    L_\delta(r_\delta x)\geq (1-s_0^q)^{1/q} L(y).
\]
Lemma~\ref{lem:chord-speed-planar-section} gives
\[
    \frac{L(y)}{(1-y^q)^{1/q}}
    \geq
    \frac{L(x)}{(1-x^q)^{1/q}}.
\]
Using \eqref{eq:rho-y-identity}, we get
\[
    (1-s_0^q)^{1/q} L(y)
    \geq
    L(x)\frac{(1-s_0^q)^{1/q}(1-y^q)^{1/q}}{(1-x^q)^{1/q}}
    =L(x).
\]
Thus \eqref{eq:pointwise-chord-comparison} follows.
\end{proof}

\RatioLpZonoidweak*
\begin{proof}
The case $p=1$ follows from Bonnesen’s inequality. Thus, our goal is to show that for $p\in (1,2]$ and  $L_p$-zonoids $A, B$ in $\mathbb{R}^2$,  we have 
$$
\frac{\left|A \oplus_p B\right|}{\left|P_{u^{\perp}}\left(A \oplus_p B\right)\right|} 
\geq
\frac{|A|}{\left|P_{u^{\perp}} A\right|}.
$$
By rotation, it is enough to prove for $u=e_2$, so that $P_{u^\perp}=P_{\R e_1}$. By approximation, it is enough to prove when $B$ is an $L_p$-zonotope.
By an inductive argument,
it suffices to prove when $B$ is just one segment $[-v,v]$. Let $v=(\alpha,\beta)\in\R^2$. If $\alpha =0$, then $v$ is vertical, hence
\[
    P_{\R e_1}(A\oplus_p[-v,v])=P_{\R e_1}A,
\]
while $A\subset A\oplus_p[-v,v]$. Therefore,
\[
    \frac{\left|A \oplus_p B\right|}{\left|P_{u^{\perp}}\left(A \oplus_p B\right)\right|} 
\geq
\frac{|A|}{\left|P_{u^{\perp}} A\right|}.
\]
Assume that $\alpha\neq0$. Consider a transformation
\[
    S(x,y)=\left(x,y-\frac{\beta}{\alpha}x\right).
\]
Then,  $\det S=1$, the map $S$ preserves horizontal projection length, that is, $P_{\mathbb{R} e_1} S=P_{\mathbb{R} e_1}$.
Observe that
\[
    S(A\oplus_p[-v,v])=SA\oplus_p[-Sv,Sv] = SA \oplus_p [-\alpha e_1,\alpha e_1] 
    .
\]
Thus,
\[
    \frac{\left|A \oplus_p B\right|}{\left|P_{u^{\perp}}\left(A \oplus_p B\right)\right|}  = \frac{\left|SA \oplus_p [-\alpha e_1,\alpha e_1]\right|}{\left|P_{u^{\perp}}\left(SA \oplus_p [-\alpha e_1,\alpha e_1]\right)\right|}, \qquad \frac{|A|}{\left|P_{u^{\perp}} A\right|} =\frac{|SA|}{\left|P_{u^{\perp}} SA\right|}.
\]
It is enough to prove the monotonicity when the added segment is horizontal. After normalizing the horizontal projection to $[-e_1,e_1]$, this is exactly Proposition~\ref{prop:horizontal-segment-planar-section}. This proves the theorem for $1<p\leq2$.
\end{proof}

\section{Determinant inequalities: proofs and counterexamples}\label{sec:determinants}

We first prove the negative part of the strong two-term result and then its
planar positive part.  We subsequently treat the weak determinant  inequality, beginning
with the counterexamples and ending with the affirmative result in dimension
three.

\subsection{Counterexamples to the strong inequality}
\DetIneq*
\begin{proof}[Proof of the negative part of Theorem \ref{thm:determinant-ineq-2}]
We give two counterexamples in $\R^3$, according to the value of $p$, with
$N'=4$, $N=8$, and $v=e_3$.  It is enough to verify
\begin{equation}    \label{eq:ratio-matrix}
R_{\mathcal{U},e_3}^{(p)}[1,8]
<
R_{\mathcal{U},e_3}^{(p)}[1,4]
+
R_{\mathcal{U},e_3}^{(p)}[5,8]
.
\end{equation}
We start with the counterexample for $p \in (1,2)$. We consider $\mathcal{U}=(u_i)_{i=1}^8$ where the vectors are given by the columns of the following matrix, 
\[
    \begin{pmatrix}
        1&0&1&0     &-1&-1&-1&-1\\
        0&1&0&1     &-1&-1&0&0\\
        1&1&-1&-1   &-1&1&-1&1\\
    \end{pmatrix}.
\]
We compute 
\begin{center}
    $R_{\mathcal{U},e_3}^{(p)}[1,4] =R_{\mathcal{U},e_3}^{(p)}[5,8] =2^p $ and $R_{\mathcal{U},e_3}^{(p)}[1,8]  = \frac{3+7\cdot 2^p +3^p}{5}$. 
\end{center}
Inserting them into \eqref{eq:ratio-matrix}, we need to check
\[
    2^p +2^p > \frac{3+7\cdot 2^p +3^p}{5}, \quad p \in (1,2).
\]
Equivalently, for $p \in (1,2)$, $2^p > 1 + 3^{p-1}.$ We consider the function $\phi $ on $(0,\infty)$ defined by $ \phi(x) = x^{p-1}$. Since $0<p-1<1$, the function $\phi $ is strictly concave, and so 
\[
    2^p = 2\phi(2) > \phi(1) + \phi(3) = 1 + 3^{p-1}.
\]

Next, we show the counterexample for $ p \in (0,1]$.  Consider $\mathcal{U}=(u_i)_{i=1}^8$ given by the columns of the following matrix, 
\[
    \begin{pmatrix}
        1&2&2&1&2&1&0&0\\
        0&0&-2&-1&0&0&1&2\\
        3&-1&0&3&-1&3&3&-1\\
    \end{pmatrix}.
\]
The absolute determinant data are the same as in the strong-projection counterexample in \cite{FHMNWZ-26-I}.
We will check \eqref{eq:ratio-matrix}. After a direct computation and simplification, we obtain
\begin{equation}
    R_{\mathcal{U},e_3}^{(p)}[1,4] +
    R_{\mathcal{U},e_3}^{(p)}[5,8] -R_{\mathcal{U},e_3}^{(p)}[1,8]
    = \frac{2}{5} \cdot \frac{ 6^p+2\cdot7^p-1-3^p-13^p }{ 1+2\cdot2^p+4^p }.
\end{equation}
For $p\in (0,1]$ we note that 
$$
 6^p+2\cdot 7^p-1 -3^p-13^p >  2\cdot 7^p-1 -13^p \ge 0, 
$$
where the last inequality follows again from the fact that for $0\leq p \leq 1$, the function $x \mapsto x^{p}$ is concave. 

We now clarify how to generalize the counterexample to higher dimensions by considering 
$$
    (\quad u_1 \quad \ldots \quad u_4 \quad e_4 \quad \ldots \quad e_n \quad u_5 \quad \ldots \quad u_8 \quad e_4 \quad \ldots \quad e_n \quad ),
$$
where $u_1,\ldots,u_8,v$ are embedded to $\R^3 \times \R^{n-3}$, $N' = n+1$ and $N = 2n+2$. The computation is the same, so we omit the proof. \qedhere

\end{proof}
\subsection{The planar strong inequality}
\label{sec:affirmative-cases}
\begin{lemma} \label{lem:laplace}
    For $p \in (0,2)$, one has 
    \[
        |x|^p = c_p \int_{0}^\infty \frac{1 - \cos (tx)}{t^{p+1}} dt,
    \]
    where $c_p >0$ is a constant depending on $p$.
\end{lemma}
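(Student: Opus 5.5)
The plan is to reduce the identity to a single one-dimensional integral by scaling, and then check that this integral is finite and positive.

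\textbf{Case $x=0$.} Both sides vanish, so this case is trivial.

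\textbf{Reduction to $x=1$.} For $x\neq0$ the integrand depends on $x$ only through $\cos(tx)$, which is even in $x$, so we may assume $x>0$. Substituting $s=tx$ gives $dt=ds/x$ and $t^{p+1}=s^{p+1}/x^{p+1}$. Hence
\[
\int_0^\infty \frac{1-\cos(tx)}{t^{p+1}}\,dt = x^{p}\int_0^\infty \frac{1-\cos s}{s^{p+1}}\,ds .
\]
It therefore suffices to show that
\[
I_p:=\int_0^\infty \frac{1-\cos s}{s^{p+1}}\,ds
\]
is finite and strictly positive. We then set $c_p=1/I_p$.

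\textbf{Finiteness of $I_p$.} The integrand is nonnegative, so it is enough to bound it near $0$ and near $\infty$.
\begin{itemize}
\item Near $0$, we use $1-\cos s\le s^2/2$. The integrand is then at most $s^{1-p}/2$, which is integrable on $(0,1]$ because $1-p>-1$, i.e. $p<2$.
\item Near $\infty$, we use $1-\cos s\le 2$. The integrand is then at most $2s^{-p-1}$, which is integrable on $[1,\infty)$ because $p>0$.
\end{itemize}

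\textbf{Positivity of $I_p$.} The integrand is continuous on $(0,\infty)$, nonnegative, and strictly positive away from the discrete set $2\pi\mathbb Z$. Therefore $I_p>0$.

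No step here is a real obstacle. The only care needed is to use both endpoint conditions $0<p<2$ in the convergence check. If an explicit constant were wanted, one could integrate by parts to get
\[
I_p=\tfrac1p\int_0^\infty \frac{\sin s}{s^{p}}\,ds=\frac{\Gamma(1-p)\cos(\pi p/2)}{p},
\]
but the lemma does not require this.
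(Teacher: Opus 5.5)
Your proposal is correct and follows the paper's proof: the substitution $u=|x|t$ reduces the identity to the constant $\int_0^\infty (1-\cos u)\,u^{-p-1}\,du$, and you check its finiteness (using both $p<2$ and $p>0$) and positivity in more detail than the paper, which simply asserts them. In your optional closed form $\Gamma(1-p)\cos(\pi p/2)/p$, the case $p=1$ has to be read as a limit, since $\Gamma(0)$ is undefined there; the limiting value is $\pi/2$.
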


\begin{proof}
The case  $x=0$ is trivial. Assume that $x \neq 0.$ Performing the variable substitution $u = |x| t$, we have
    \begin{align}
        \int_{0}^\infty \frac{1 - \cos (tx)}{t^{p+1}} dt &= |x|^p\int_{0}^\infty \frac{1 - \cos u}{u^{p+1}} du.
    \end{align}
    Note that $c_p^{-1}:=\int_{0}^\infty \frac{1 - \cos u}{u^{p+1}} du$ is finite and positive when $p \in (0,2)$.
\end{proof}
\begin{proof}[Proof of the positive part of Theorem \ref{thm:determinant-ineq-2}]
 By applying an orthogonal transformation, we may assume that
$v=e_1$.   For each $1 \leq i \leq N$, we write $u_i = (x_i,y_i)$. We prove the following inequality: for any $ p \in (0,2) $
    \begin{equation} \label{eq:ineq-determinant}
        \frac{\displaystyle\sum_{1\leq i< j \leq N'} \left| x_iy_j-x_jy_i\right|^p}{\displaystyle\sum_{1\leq i \leq N'}|y_i|^p} 
        +
        \frac{\displaystyle\sum_{N' + 1\leq i< j \leq N} \left| x_iy_j-x_jy_i\right|^p}{\displaystyle\sum_{N'+1\leq i \leq N}|y_i|^p} 
        \leq 
        \frac{\displaystyle\sum_{1\leq i< j \leq N} \left| x_iy_j-x_jy_i\right|^p}{\displaystyle\sum_{1\leq i \leq N}|y_i|^p}.
    \end{equation}
    In what follows, all indices are assumed to range from $1$ to $N$. First, suppose that $\sum_{i \leq N^{\prime}}\left|y_i\right|^p=0 $ or $ \sum_{i>N^{\prime}}\left|y_i\right|^p=0$. Then,  \eqref{eq:ineq-determinant} is immediate. Indeed, without loss of generality, $\sum_{i \leq N^{\prime}}\left|y_i\right|^p=0.$ Thus, all $y_i=0$ for $i \leq N'$, the first ratio is zero by convention. Moreover, the denominator on the right-hand side is the same as the denominator on the left-hand side, while the numerator on the right-hand side contains additional nonnegative terms. Therefore, we may assume that both partial denominators are positive.
    Observe that 
    \[
        \frac{\displaystyle\sum_{i< j \leq N'} \left| x_iy_j-x_jy_i\right|^p}{\displaystyle\sum_{ i \leq N'}|y_i|^p} 
        =
        \sum_{i \leq N': y_i =0}|x_i|^p+\frac{  \displaystyle\sum_{i< j \leq N': y_i, y_j \neq 0} \left| x_iy_j-x_jy_i\right|^p}{ \displaystyle\sum_{ i \leq N': y_i \neq 0} |y_i|^p}.
    \]
    Computing the remaining quotients and substituting them into \eqref{eq:ineq-determinant}, the term $ \sum_{i: y_i =0}|x_i|^p$ can be canceled from both sides. Hence, it is enough to prove the inequality \eqref{eq:ineq-determinant} under the assumption that $y_i \neq 0$ for all $i$.
    Simplifying inequality \eqref{eq:ineq-determinant}, it is equivalent to
    \begin{equation}
\label{eq:equivalent-inequality}
\frac{\displaystyle\sum_{1\le i<j\le N'}
      \!\!\!|x_i y_j-x_j y_i|^p}
     {\left(\displaystyle\sum_{1\le i\le N'}|y_i|^p\right)^2}
+
\frac{\displaystyle\sum_{N'+1\le i<j\le N}
    \!\!\!  |x_i y_j-x_j y_i|^p}
     {\left(\displaystyle\sum_{N'+1\le i\le N}|y_i|^p\right)^2}
\leq
\frac{\displaystyle\sum_{\substack{1\le i\le N'\\
                N'+1\le j\le N}}
     \!\!\!\!\!\! |x_i y_j-x_j y_i|^p}
     {\left(\displaystyle\sum_{1\le i\le N'}\!\!\!|y_i|^p\right)\!\!
      \left(\displaystyle\sum_{N'+1\le i\le N}\!\!\!|y_i|^p\right)}.
\end{equation}
    We define two probability measures $\mu$ and $\nu $ on $\R$ as follows:
    \[
        \mu = \frac{\displaystyle\sum_{i \leq N'}|y_i|^p \delta_{\{x_i/y_i\}}}{\displaystyle\sum_{i \leq N'}|y_i|^p}  \qquad \text { and } \qquad 
        \nu = \frac{\displaystyle\sum_{N'+1\leq i }|y_i|^p \delta_{\{x_i/y_i\}}}{\displaystyle\sum_{N'+1\leq i }|y_i|^p} .
    \]
    Using the measures $\mu $ and $\nu$, we can rewrite \eqref{eq:equivalent-inequality} as
    \begin{equation}
        \label{eq:equivalent-with-measures}
    \frac{1}{2} \int \int |s-t|^p d \mu(s) d \mu(t) +\frac{1}{2} \int \int |s-t|^p d \nu(s) d \nu(t) \leq \int \int |s-t|^p d \mu(s) d \nu(t).
    \end{equation}
    Denote the signed measure $\sigma = \mu -\nu$. Thus, \eqref{eq:equivalent-with-measures} is equivalent to
    \begin{equation} \label{eq:equivalent-with-sign-measure}
        \int \int |s-t|^p d \sigma(s) d \sigma(t) \leq 0.
    \end{equation}
    Using Lemma \ref{lem:laplace} and Fubini's theorem, we have
    \begin{align}
        \int \int |s-t|^p d \sigma(s) d \sigma(t) &= \int \int \left(
        c_p \int_{0}^\infty \frac{1 - \cos ((s-t)u)}{u^{p+1}} du \right)
        d \sigma(s) d \sigma(t)
        \\
        &=  \int_{0}^\infty \frac{c_p}{u^{1+p}}\int \int 
        (1 - \cos ((s-t)u)) 
        d \sigma(s) d \sigma(t) du.
    \end{align}
    Since $\mu,\nu$ are probability measures, $\sigma(\R) = 0$. Thus,
    \begin{align}
        \int &\int 
        (1 - \cos ((s-t)u)) 
        d \sigma(s) d \sigma(t) 
        = -\int \int 
        \cos ((s-t)u) 
        d \sigma(s) d \sigma(t)
        \\
        &= -\int \int 
        \Re \{e^{iu(s-t)}\}
        d \sigma(s) d \sigma(t)
        = -\Re \left\{\int \int 
        e^{iu(s-t)}
        d \sigma(s) d \sigma(t)\right\}
        = - \left|\int 
        e^{ius} d \sigma(s) \right|^2 \\
        &\leq 0,
    \end{align}
    where $\Re f$ is the real part of $f$. Plugging it back into \eqref{eq:equivalent-with-sign-measure}, the proof is complete.
\end{proof}

\subsection{Counterexamples to the weak inequality}
\DetIneqWeak*
\begin{proof}[Proof of the negative part of Theorem \ref{thm:determinant-ineq-weak}]
    We give two counterexamples, according to the value of $p$.
    We first consider the case $p \in (1,2)$. Let $N'=5$ and $\mathcal{U}=(u_i)_{i=1}^6$ given by the columns of the following matrices,
\[
    \begin{pmatrix}
        1&1&1&1&\varepsilon^{1/p}     &0    \\
        -1&1&-1&1&0             &1    \\
        0&0&1&1&2 \varepsilon^{1/p}   &0    \\
    \end{pmatrix},
\]
and take $v = e_3$. We will prove that there exists $\varepsilon = \varepsilon (p)> 0$ such that
\begin{equation}    \label{eq:ratio-matrix-weak}
R_{\mathcal{U},e_3}^{(p)}[1,5]
>
R_{\mathcal{U},e_3}^{(p)}[1,6].
\end{equation}
We compute 
    $$
        R_{\mathcal{U},e_3}^{(p)}[1,5] = \displaystyle\frac{2^{p+2}+\left(4^p+2 \cdot 3^p+2^p+2\right) \varepsilon}{2^{p+2}+4 \varepsilon},
    $$
    and
    $$
        R_{\mathcal{U},e_3}^{(p)}[1,6] = \displaystyle\frac{2^{p+2}+4+\left(4^p+2 \cdot 3^p+3\cdot 2^p+4\right) \varepsilon}{2^{p+2}+4+5 \varepsilon}.
    $$
Thus, \eqref{eq:ratio-matrix-weak} is equivalent to
\[
    \varepsilon\left(4\left(2 \cdot 3^p-4^p-2\right)+\left(4^p+2 \cdot 3^p-7 \cdot 2^p-6\right) \varepsilon\right) > 0.
\]
Define the function $F$ by
\[
    F(\varepsilon) = \varepsilon\left(4\left(2 \cdot 3^p-4^p-2\right)+\left(4^p+2 \cdot 3^p-7 \cdot 2^p-6\right) \varepsilon\right).
\]
Since $p \in (1,2)$, using the concavity of $x \mapsto x^{p-1}$, we obtain $3^{p-1} > \frac{2}{3}4^{p-1}+\frac{1}{3}$. Hence,
\[
    F' (0) = 4\left(2 \cdot 3^p-4^p-2\right) >0.
\]
Since $F(0) = 0$, there exists $\varepsilon>0$ such that $ F(\varepsilon) >0$, verifying \eqref{eq:ratio-matrix-weak}. This counterexample can be generalized to higher dimensions by considering 
$$
    (\quad u_1 \quad \ldots \quad u_5 \quad e_4 \quad \ldots \quad e_n \quad u_6 \quad ),
$$
where $u_1,\ldots,u_6, v$ are embedded to $\R^3 \times \R^{n-3}$, $N' = n+2$ and $N = n+3$.

Next, we present the counterexample in dimension $n \geq 4$ for $p \in (0,1)$. Consider $N'=6$ and $\mathcal{U}=(u_i)_{i=1}^7$ given by the columns of the following matrices,
\begin{equation}
    \begin{pmatrix}
        1&0&1&1&0&0&1\\
        0&1&1&0&0&0&0\\
        0&1&1&-1&-3&0&0\\
        2&2&2&0&0&-1&0
    \end{pmatrix},
\end{equation}
and take $ v = e_2$. We compute 
\[
    R_{\mathcal{U},e_2}^{(p)}[1,6] = \frac{3+2^p+5 \cdot 3^p+3 \cdot 6^p}{5+4 \cdot 2^p+3 \cdot 3^p+5 \cdot 6^p} \quad \text{and} \quad R_{\mathcal{U},e_2}^{(p)}[1,7] = \frac{5+3 \cdot 2^p+7 \cdot 3^p+5 \cdot 6^p}{8+9 \cdot 2^p+4 \cdot 3^p+8 \cdot 6^p}.
\]
We need to show that
\[
    R_{\mathcal{U},e_2}^{(p)}[1,6]
>
R_{\mathcal{U},e_2}^{(p)}[1,7],
\]
or equivalently,
\begin{equation} \label{eq:deter-counterexample-01-weak}
    -1 +2\cdot 3^p -3 \cdot 4^p+ 10 \cdot 6^p - 9^p + 2 \cdot 18^p - 36^p >0.
\end{equation}
Then, $-1 +2\cdot 3^p >1 >0$.
Since $p \in (0,1)$, we get $2 \cdot 18^p - 36^p = 18^p (2 - 2^p) >0$. Since $9^p = 6^p \left(\frac{3}{2}\right)^p <\frac{3}{2} \cdot 6^p$, we obtain
\[
    -3 \cdot 4^p+ 10 \cdot 6^p - 9^p > 10 \cdot 6^p -\frac{3}{2} \cdot 6^p - 3 \cdot 6^p = \frac{11}{2} \cdot 6^p.
\]
Combining these three inequalities yields \eqref{eq:deter-counterexample-01-weak}. This counterexample can be generalized to higher dimensions by considering 
$$
    (\quad u_1 \quad \ldots \quad u_6 \quad e_5 \quad \ldots \quad e_n \quad u_7 \quad ),
$$
where $u_1,\ldots,u_7,v$ are embedded to $\R^4 \times \R^{n-4}$, $N' = n+2$ and $N = n+3$.
\end{proof}
\subsection{The weak determinant inequality in dimension three}

\begin{lemma} \label{lem:representation-p-01}
    Let $ a,b \in \R$ and $ p \in (0,1)$. Then,
    \[
        |a-b|^p=\frac{p(1-p)}{2} \int_0^{\infty} \int_{\mathbb{R}}\left|{1}_{[s, s+t]}(a)-{1}_{[s, s+t]}(b)\right| d s \, t^{p-2} d t.
    \]
\end{lemma}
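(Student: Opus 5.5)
The plan is to evaluate the inner integral over $s$ first, for fixed $t>0$, and then integrate in $t$. If $a=b$, the integrand is identically zero and both sides vanish, so we may assume $a\neq b$. Set $d=|a-b|>0$.

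For fixed $t$, the quantity $\left|1_{[s,s+t]}(a)-1_{[s,s+t]}(b)\right|$ is the indicator of the event that exactly one of $a,b$ lies in $[s,s+t]$. The set of $s$ with $a\in[s,s+t]$ is the interval $[a-t,a]$, of length $t$, and similarly for $b$. The $s$-integral is therefore the measure of the symmetric difference of $[a-t,a]$ and $[b-t,b]$. Two translates of the same interval of length $t$, shifted by $d$, overlap in length $(t-d)_+$, so the symmetric difference has measure
\[
2t-2(t-d)_+=2\min(t,d).
\]

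Substituting this gives
\[
\int_0^\infty 2\min(t,d)\,t^{p-2}\,dt
=2\int_0^d t^{p-1}\,dt+2d\int_d^\infty t^{p-2}\,dt
=\frac{2d^p}{p}+\frac{2d^p}{1-p}
=\frac{2d^p}{p(1-p)}.
\]
Both integrals converge because $0<p<1$: near zero the integrand is $t^{p-1}$, which is integrable since $p>0$, and at infinity it is $t^{p-2}$, which is integrable since $p<1$. Multiplying by $\frac{p(1-p)}{2}$ yields $d^p=|a-b|^p$, as claimed. Tonelli's theorem justifies integrating over $s$ first, since the integrand is nonnegative.

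The only step needing care is the symmetric-difference measure. One has to treat the cases $t\le d$ (disjoint intervals, measure $2t$) and $t>d$ (overlapping intervals, measure $2d$) correctly, since these determine the split of the $t$-integral at $t=d$. Everything else is routine.
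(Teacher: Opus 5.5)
Your proposal is correct and follows essentially the same route as the paper: you identify the $s$-integrand as the indicator of the symmetric difference $[a-t,a]\,\triangle\,[b-t,b]$, whose measure is $2\min\{t,|a-b|\}$, and then split the $t$-integral at $t=|a-b|$. The paper does exactly this, assuming $a<b$ in place of your $d=|a-b|$.
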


\begin{proof}
    The formula is trivial when $a= b$. Hence, we may assume that $a <b$. Observe that
    \[
        1_{[s,s+t]} (a) = 1 \qquad \text{if and only if} \quad s \in [a-t,a].
    \]
    Thus,
    \[
        \left|{1}_{[s, s+t]}(a)-{1}_{[s, s+t]}(b)\right|= 1_{[a-t, a] \triangle[b-t, b]}(s).
    \]
    Hence, 
    \[
        \int_{\mathbb{R}}\left|{1}_{[s, s+t]}(a)-{1}_{[s, s+t]}(b)\right| d s =
        2 \min \{t,b-a\}.
    \]
    Therefore,
    \begin{align}
        &\frac{p(1-p)}{2} \int_0^{\infty} \int_{\mathbb{R}}\left|{1}_{[s, s+t]}(a)-{1}_{[s, s+t]}(b)\right| d s \, t^{p-2} d t 
        \\
        &=
        p(1-p) \left(\int_0^{b-a}  t^{p-1} d t
        +
        (b-a)\int_{b-a}^{\infty}   t^{p-2} d t\right)
        =
        p(1-p) \left( \frac{(b-a)^p}{p}
        +
        \frac{(b-a)^{p}}{1-p}\right)
        \\
        &= (b-a)^p. \qedhere
    \end{align}
\end{proof}

    \begin{lemma} \label{lem:desired-ineq-12-01}
    For any $ 1 \leq i \leq 3 $, let $ x_i, y_i \in \{0,1\}$. 
    For each $ 1 \leq i \leq 3$, let $j,k$ be the remaining two indices, and define
    \[
        X_ i = \frac{|x_i - x_j| + |x_i - x_k| - |x_k - x_j|}{2}, \quad \text{and} \quad Y_ i = \frac{|y_i - y_j| + |y_i - y_k| - |y_k - y_j|}{2}.
    \]
    Then, 
    \begin{equation}
    \begin{split} 
        &\left|\operatorname{det}\left(\begin{array}{ccc}1 & 1 & 1 \\ x_1 & x_2 & x_3 \\ y_1 & y_2 & y_3\end{array}\right)\right| 
        =
        \sum_{i \neq j} X_i Y_j.
    \end{split} 
    \end{equation}
\end{lemma}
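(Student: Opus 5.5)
This is a finite identity on $\{0,1\}^6$, and I would prove it by reducing to a small number of configurations using the symmetries of both sides, then checking each case directly.

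\emph{Symmetries.} The left-hand side is invariant under four operations:
\begin{itemize}
\item permuting the indices $1,2,3$ (this permutes columns);
\item swapping the roles of $x$ and $y$ (this swaps rows);
\item the flip $x_i\mapsto 1-x_i$, and likewise for $y$. Replacing the second row by the first row minus it only changes the sign of the determinant.
\end{itemize}
The right-hand side has the same invariances. Each $X_i$ depends only on the pairwise distances $|x_a-x_b|$, so it is unchanged by the flip. Permuting indices permutes the $X_i$ and $Y_j$ simultaneously, which preserves $\sum_{i\neq j}X_iY_j$. Swapping $x$ and $y$ turns $\sum_{i\ne j}X_iY_j$ into $\sum_{i\ne j}Y_iX_j$, which is the same sum.

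\emph{Structure of $X$.} Since the $x_i$ take at most two values, either all $x_i$ are equal, or exactly one index $i_0$ differs from the other two. In the first case $X=(0,0,0)$. In the second case all three distances are computed directly: $X_{i_0}=(1+1-0)/2=1$, while each other index gives $(1+0-1)/2=0$. So $X=e_{i_0}$. In other words, $X_i$ is the indicator that $x_i$ is the ``odd one out''. The same holds for $Y$.

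\emph{Right-hand side.} If $X=e_a$ and $Y=e_b$, then $\sum_{i\ne j}X_iY_j=1$ when $a\ne b$ and $0$ when $a=b$. If $X=0$ or $Y=0$, the sum is $0$.

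\emph{Left-hand side.} Subtract $x_{a'}$ times the first row from the second row, choosing $a'$ so that the second row becomes the indicator of the odd index $a$ (flipping if necessary). Do the same for the third row. The determinant becomes, up to sign,
\[
\det\begin{pmatrix}1&1&1\\ e_a\\ e_b\end{pmatrix}.
\]
This vanishes if $a=b$ and has absolute value $1$ if $a\ne b$. If the $x_i$ are constant, the second row is a multiple of the first, so the determinant is $0$, matching $X=0$. The same argument covers a constant $y$.

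The two sides therefore agree in every case. There is no real obstacle; the only point needing care is the observation that $X$ is the odd-one-out indicator.
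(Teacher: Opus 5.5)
Your proof is correct and follows essentially the same route as the paper. It identifies $X$ and $Y$ as the odd-one-out indicators $e_m$ and $e_n$ (or zero when a row is constant), then reduces the second and third rows to $\pm e_m$ and $\pm e_n$ using the first row, which gives $0$ when $m=n$ and absolute value $1$ when $m\neq n$; the opening paragraph on symmetries is never actually used and could be dropped.
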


\begin{proof}
    If $x_1 = x_2 = x_3$ or $y_1 = y_2 = y_3$, then the row corresponding to $x$ or $y$ is a multiple of the first row. Hence, the determinant is zero. Also, in this case, it is straightforward to check that $X_1 = X_2 = X_3 = 0$ or $Y_1 = Y_2 = Y_3 = 0$, respectively. Thus, the right-hand side is also zero.

    We therefore assume that $x_1,x_2,x_3$ are not all equal, and that $y_1,y_2,y_3$ are not all equal. Since each entry is either $0$ or $1$, there exists an index $m\in \{1,2,3\}$ such that $x_m$ is different from the other two $x$'s. Thus,
    \begin{equation} \label{eq:A-formula}
        X_i =
        \begin{cases}
            \frac{1+1-0}{2} = 1, & i=m,
            \\
            \frac{1+0-1}{2} =0, &i \neq m.
        \end{cases} 
    \end{equation}
    Similarly, there exists an index $n$ such that $y_n$ is different from the other two $y$'s and 
    \begin{equation} \label{eq:B-formula}
        Y_i =
        \begin{cases}
            1, & i=n,
            \\
            0, &i \neq n.
        \end{cases} 
    \end{equation}
    \begin{enumerate}[wide =0pt, labelwidth= 1.1cm]
        \item[\bf Case 1:] Suppose that $m=n$. The columns corresponding to other indices are the same. Thus, the determinant is zero. On the other hand, using \eqref{eq:A-formula} and \eqref{eq:B-formula}, we have 
        \[
            \sum_{i \neq j} X_i Y_j = \sum_{i \in \{1,2,3\} \setminus \{m\}} Y_i =0. 
        \]
        \item[\bf Case 2:] Suppose $ m \neq n$. First, using \eqref{eq:A-formula} and \eqref{eq:B-formula}, we have 
        $\sum_{i \neq j} X_i Y_j =1$.
        On the other hand, if $x_m =0$, we replace the second row by the first row minus the second row. Since this process changes the determinant only by a sign, we obtain
        \[
            \left|\operatorname{det}\left(\begin{array}{ccc}1 & 1 & 1 \\ x_1 & x_2 & x_3 \\ y_1 & y_2 & y_3\end{array}\right)\right|  = 
            \left|\operatorname{det}\left(\begin{array}{ccc}1 & 1 & 1 \\ 1- x_1 & 1- x_2  & 1- x_3 \\ y_1 & y_2 & y_3\end{array}\right)\right|.
        \]
        Therefore, after this replacement, we may assume that the second row has value 1 in the $m$th entry and value 0 in the remaining entries. Applying the same argument to the third row, we may assume that the third row has value 1 in the $n$th entry and value 0 in the remaining entries. Since $m \neq n$, the resulting determinant has absolute value  1. \qedhere
    \end{enumerate}
\end{proof}

\begin{lemma} \label{lem:desired-ineq-12}
    Let $ p \in (0,1) $. For any $ 1 \leq i \leq 3 $, let $ a_i, b_i \in \R$. Define the measure $\mu$ on $\R \times (0,\infty)$ by 
    $$
        d \mu (s,t) = \frac{p(1-p)}{2} d s \, t^{p-2} d t.
    $$ 
    Then, 
    \begin{equation}
    \begin{split} 
        &\left|\operatorname{det}\left(\begin{array}{ccc}1 & 1 & 1 \\ a_1 & a_2 & a_3 \\ b_1 & b_2 & b_3\end{array}\right)\right|^p  
        \\
        &\qquad \leq
        \iint\left|\operatorname{det}\left(\begin{array}{ccc}1 & 1 & 1 \\ 1_{[s,s+t]}(a_1) & 1_{[s,s+t]}(a_2) & 1_{[s,s+t]}(a_3) \\ 1_{[\sigma,\sigma+\tau]}(b_1) & 1_{[\sigma,\sigma+\tau]}(b_2) & 1_{[\sigma,\sigma+\tau]}(b_3)
        \end{array}\right)\right| d \mu(s, t) d \mu(\sigma, \tau).
    \end{split} \label{eq:desired-ineq-12}
    \end{equation}
\end{lemma}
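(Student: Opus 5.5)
The plan is to reduce the inequality to an elementary inequality about pairwise distances. The reduction uses Lemma~\ref{lem:desired-ineq-12-01} for the integrand and Lemma~\ref{lem:representation-p-01} for the integrals.

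\textbf{Step 1: linearize the integrand.} For fixed $(s,t)$ and $(\sigma,\tau)$, the entries $1_{[s,s+t]}(a_i)$ and $1_{[\sigma,\sigma+\tau]}(b_i)$ lie in $\{0,1\}$. So Lemma~\ref{lem:desired-ineq-12-01} applies, and the integrand equals $\sum_{i\neq j}X_i(s,t)\,Y_j(\sigma,\tau)$. Here $X_i$ is built from the quantities $|1_{[s,s+t]}(a_i)-1_{[s,s+t]}(a_j)|$, and similarly for $Y_j$. Everything is nonnegative and $\mu\otimes\mu$ is a product measure, so Tonelli gives
\[
\text{RHS}=\sum_{i\neq j}\alpha_i\beta_j,\qquad \alpha_i=\int X_i\,d\mu,\quad \beta_j=\int Y_j\,d\mu .
\]
By linearity and Lemma~\ref{lem:representation-p-01},
\[
\alpha_i=\tfrac12\bigl(d_{ij}^p+d_{ik}^p-d_{jk}^p\bigr),\qquad d_{ij}=|a_i-a_j|,
\]
and similarly $\beta_i=\tfrac12(e_{ij}^p+e_{ik}^p-e_{jk}^p)$ with $e_{ij}=|b_i-b_j|$. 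These satisfy $\alpha_i+\alpha_j=d_{ij}^p$ and $\beta_i+\beta_j=e_{ij}^p$. Also $\alpha_i,\beta_i\ge0$, by subadditivity of $x\mapsto x^p$ for $0<p<1$ together with the triangle inequality. (Alternatively, nonnegativity is immediate because the $X_i,Y_i\in\{0,1\}$.)

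\textbf{Step 2: rewrite the target.} Using $\beta_j+\beta_k=e_{jk}^p$, the inequality becomes
\[
|\det|^p\le \alpha_1e_{23}^p+\alpha_2e_{13}^p+\alpha_3e_{12}^p .
\]
By relabeling, assume $a_1\le a_2\le a_3$, so $d_{13}=d_{12}+d_{23}$. Expanding the determinant gives
\[
\det=d_{12}(b_3-b_2)-d_{23}(b_2-b_1).
\]
Substituting $\alpha_1=d_{12}^p-\alpha_2$ and $\alpha_3=d_{23}^p-\alpha_2$, the target reads
\[
|d_{12}(b_3-b_2)-d_{23}(b_2-b_1)|^p\le d_{12}^pe_{23}^p+d_{23}^pe_{12}^p-\alpha_2\bigl(e_{12}^p+e_{23}^p-e_{13}^p\bigr).
\]
Both the left side and the $\alpha,\beta$ are invariant under translating, scaling, or reflecting $a$ and $b$ separately. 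So I would normalize to $a=(0,x,1)$ with $x\in[0,1]$, and split into cases according to which index is the middle point of the $b$'s.

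\textbf{Step 3: the cases.}

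Case $b_2$ is also the middle point. Then $b_3-b_2$ and $b_2-b_1$ have the same sign, so the two terms in $\det$ partially cancel. Hence $|\det|\le\max(d_{12}e_{23},\,d_{23}e_{12})$. The subtracted term is at most $\min(d_{12}^p,d_{23}^p)\cdot\min(e_{12}^p,e_{23}^p)$, because $\alpha_2\le\min(d_{12}^p,d_{23}^p)$ and $e_{12}^p+e_{23}^p-e_{13}^p\le 2\min(e_{12}^p,e_{23}^p)$ (careful: the precise constants need checking). Then a rearrangement argument should give the bound, namely $\max(uv',u'v)\le uv'+u'v-\min(u,u')\min(v,v')$.

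Case the middle point of the $b$'s is a different index, say $b_1$ or $b_3$. By the symmetric formula, I would instead use the expression of the right side with the $\beta$'s expanded around the $b$-middle index. Then I would combine the two expansions so that the correction term has a favorable sign. Here the two terms in $\det$ add, but the geometry forces one of $e_{12},e_{23}$ to be the long side. One then uses $|\det|^p\le(d_{12}e_{23})^p+(d_{23}e_{12})^p$ minus a concavity gain from $x\mapsto x^p$.

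\textbf{Main obstacle.} The last case is the step I expect to be hardest: a two-parameter elementary inequality in $x$ and the $b$-ratio. I would first try to prove it by fixing one variable and using concavity of $t\mapsto t^p$ in the other, checking the endpoints where some $d_{ij}$ or $e_{ij}$ vanishes. As a sanity check, when all $a_i,b_i\in\{0,1\}$ the inequality is exactly Lemma~\ref{lem:desired-ineq-12-01}.
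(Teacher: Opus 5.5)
Your Step~1 matches the last part of the paper's proof (Lemma~\ref{lem:desired-ineq-12-01} pointwise, then Lemma~\ref{lem:representation-p-01} and Tonelli). So the lemma reduces to the elementary inequality $|\det|^p\le\sum_{i\neq j}\alpha_i\beta_j$, which the paper also proves by normalizing $a=(0,r,1)$ and splitting according to the position of the $b$'s. Your case ``$b_2$ is the middle point'' works once the constant is repaired, and it is slicker than the paper's derivative argument for its Cases~1 and~3. The bound on the subtracted term does not follow from $e_{12}^p+e_{23}^p-e_{13}^p\le 2\min(e_{12}^p,e_{23}^p)$: with that factor $2$ the chain of bounds is already false when $d_{12}=d_{23}$ and $e_{12}=e_{23}$. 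It does follow from $e_{13}\ge\max(e_{12},e_{23})$, which gives $e_{12}^p+e_{23}^p-e_{13}^p\le\min(e_{12}^p,e_{23}^p)$. Then $\min(u,u')\min(v,v')\le\min(uv',u'v)$, with $u=d_{12}^p$, $u'=d_{23}^p$, $v=e_{12}^p$, $v'=e_{23}^p$, finishes it.

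The genuine gap is the other case, which carries the real content of the lemma and which you leave open. After normalizing $a=(0,r,1)$ and, say, $b=(0,1,s)$ with $s\in[0,1]$ (so $b_3$ is the middle point), the inequality reads
\[
F(s):=\alpha_1(1-s)^p+\alpha_2 s^p+\alpha_3-(1-rs)^p\ge 0 .
\]
Here $F(0)=F(1)=0$, because $\alpha_1+\alpha_3=1$ and $\alpha_2+\alpha_3=(1-r)^p$.

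Your suggested route, concavity in one variable plus endpoint checks, does not close this. $F$ is a sum of concave terms and the convex term $-(1-rs)^p$, and it is not concave in general: for $p=\tfrac12$ and $r=1-10^{-4}$ one has $F(0.7)<\tfrac12\bigl(F(0.6)+F(0.8)\bigr)$. In this case the inequality is symmetric under $r\leftrightarrow s$, so concavity in $r$ fails as well. Writing $|\det|^p$ as $(d_{12}e_{23})^p+(d_{23}e_{12})^p$ minus a ``concavity gain'' only restates the target.

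The missing idea, which the paper uses, is a sign analysis of $F'$ rather than of $F''$:
\[
F'(s)=p(1-s)^{p-1}\Bigl[-\alpha_1+\alpha_2\Bigl(\frac{1-s}{s}\Bigr)^{1-p}+r\Bigl(\frac{1-s}{1-rs}\Bigr)^{1-p}\Bigr].
\]
The bracket is non-increasing on $(0,1)$, because $\alpha_2,r\ge 0$, $1-p>0$, and both $\frac{1-s}{s}$ and $\frac{1-s}{1-rs}$ are non-increasing. So $F$ is first non-decreasing and then non-increasing, and hence $F\ge\min\{F(0),F(1)\}=0$. The configuration $b=(1,0,s)$ reduces to this one by $s\mapsto 1-s$. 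The configurations with $b_1$ as middle point reduce to it by interchanging the indices $1$ and $3$.
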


\begin{proof}
    For each $i \in \{1,2,3\}$, let $j,k$ denote the remaining two indices, and define
    \[
        A_ i = \frac{|a_i - a_j|^p + |a_i - a_k|^p - |a_k - a_j|^p}{2}, \quad \text{and}\quad B_ i = \frac{|b_i - b_j|^p + |b_i - b_k|^p - |b_k - b_j|^p}{2}.
    \]
    First, we claim that
    \begin{equation} \label{eq:desired-ineq-12-claim1}
        \left|\operatorname{det}\left(\begin{array}{ccc}1 & 1 & 1 \\ a_1 & a_2 & a_3 \\ b_1 & b_2 & b_3\end{array}\right)\right|^p  
        \leq
        \sum_{i \neq j} A_i B_j.
    \end{equation}
    Since $|a-b|^p$ is a metric for $p \in (0,1]$, we get that 
        $A_i,B_i \geq 0$ for all $1 \leq i\leq 3$.

    We may assume that $a_1,a_2,a_3$ are not all equal and that $b_1,b_2,b_3$ are not all equal. Indeed, if $a_1 =a_2=a_3$, then the second row of the determinant is a multiple of the first row, so the determinant is zero and inequality \eqref{eq:desired-ineq-12-claim1} is trivial. The same argument applies if $b_1=b_2=b_3$. 
    Without loss of generality, reordering if needed, we assume that $a_1 \leq a_2 \leq a_3$. Thus, $a_3 > a_1$. Using the first row of the matrix to eliminate the first entry of the second row, we get
        \begin{align}
            \left|\operatorname{det}\!\!\left(\begin{array}{ccc}1 & 1 & 1 \\ a_1 & a_2 & a_3 \\ b_1 & b_2 & b_3\end{array}\right)\right|^p \!\!  
            = \left|\operatorname{det}\!\!\left(\begin{array}{ccc}1 & 1 & 1 \\ 0 & a_2-a_1 & a_3-a_1 \\ b_1 & b_2 & b_3\end{array}\right)\right|^p 
            \!\!\!= |a_3 -a_1|^p \left|\operatorname{det}\!\!\left(\begin{array}{ccc}1 & 1 & 1 \\ \tilde a_1 &\tilde a_2 & \tilde a_3 \\ b_1 & b_2 & b_3\end{array}\right)\right|^p\!\!\!,
        \end{align}
        where 
        \[
            \tilde a_1=0,\qquad \tilde a_2=\frac{a_2-a_1}{a_3-a_1},\qquad \tilde a_3=1.
        \]
        Then, for every $i, j \in\{1,2,3\}$,
        $$
        \left|a_i-a_j\right|^p=\left|a_3-a_1\right|^p\left|\widetilde{a}_i-\widetilde{a}_j\right|^p .
        $$
        On the other hand, for each  $1\leq i \leq 3$, we have 
        \[
            A_i = |a_3 -a_1|^p \frac{| \tilde a_i - \tilde a_j|^p + |\tilde a_i - \tilde a_k|^p - | \tilde a_k - \tilde a_j|^p}{2}.
        \]
        Dividing both sides of \eqref{eq:desired-ineq-12-claim1} by $|a_3 -a_1|^p$, it suffices to prove the inequality in the case
        \[
            a_1 = 0, \qquad a_2 = r, \qquad a_3 =1\qquad \text{for } r \in [0,1].
        \]
        We normalized $b_1,b_2,b_3$ in the same way, and we may assume that
        \[
            \{ b_1,b_2,b_3\} = \{ 0,s,1\} \quad \text{for } s \in [0,1].
        \]
        Note that the indices of $b_1,b_2,b_3$ remain attached to the corresponding $a_1,a_2,a_3$. Thus, we cannot reorder them independently. 
    \begin{enumerate}[wide =0pt, labelwidth= 1.1cm]
        \item[\bf Case 1:] Suppose that $b_1 = 0, b_2 = s, b_3 =1 $ for $ s \in [0,1]$.
        Since both sides of the desired inequality are symmetric under interchanging the $a$- and $b$-coordinates, it is enough to consider the case
        $$
        0 \leq s \leq r \leq 1 .
        $$
        The left-hand side of \eqref{eq:desired-ineq-12-claim1} is
        \begin{align}
            \left|\operatorname{det}\left(\begin{array}{ccc}1 & 1 & 1 \\ 0 & r & 1 \\ 0 & s & 1\end{array}\right)\right|^p   
            &= (r-s)^p.
        \end{align}
        
        Moreover, 
            $$A_1 = \frac{r^p + 1 - (1-r)^p}{2}, \quad A_2 = \frac{r^p + (1-r)^p - 1}{2}, \quad A_3 = \frac{1 + (1-r)^p-r^p}{2}, $$
        and 
            $$B_1 = \frac{s^p + 1 - (1-s)^p}{2}, \quad B_2 = \frac{s^p + (1-s)^p - 1}{2},\quad B_3 = \frac{1 + (1-s)^p-s^p}{2} .$$
        Thus,
            $\sum_{i \neq j} A_i B_j = A_1 (1-s)^p + A_2 + A_3s^p ,$
        it remains to prove that $$A_1 (1-s)^p + A_2 + A_3s^p \geq |r-s|^p.$$ 
        For fixed $r$, define $F$ on $[0,r]$ by
        \[
            F(s) = A_1 (1-s)^p + A_2 + A_3s^p - (r-s)^p.
        \]
        Then, \eqref{eq:desired-ineq-12-claim1} is equivalent to $F(s) \geq 0$. It is enough to show that $F$ is increasing since $F(0) = A_1+A_2-r^p = 0$.
        We compute
        $$
            F' (s) =  -pA_1(1-s)^{p-1} +pA_3s^{p-1} + p(r-s)^{p-1}.
        $$
        Since $A_1 = 1-A_3$ and $A_1,A_3 \geq 0$, we get that $0\leq A_1 \leq 1$. Thus,
        \[
            pA_1(1-s)^{p-1} \leq p(1-s)^{p-1} \leq p(r-s)^{p-1},
        \]
        where the last inequality follows from the fact that $ p<1$ and $r-s \leq 1-s$. Hence $F' \geq 0$. 
        \item[\bf Case 2:] Suppose that $b_1 = 0, b_2 =1,b_3 =s$. 
        The left-hand side of \eqref{eq:desired-ineq-12-claim1} is 
        \begin{align}
    \left|\operatorname{det}\left(\begin{array}{ccc}1 & 1 & 1 \\ 0 & r & 1 \\ 0 & 1 & s\end{array}\right)\right|^p   
            &= (1-rs)^p.
        \end{align}
        For the right-hand side of \eqref{eq:desired-ineq-12-claim1}, a direct computation gives 
        \[
             \sum_{i \neq j} A_i B_j = A_1 (1-s)^p + A_2 s^p + A_3,
        \]
        and $A_1+A_3 = 1, A_2 +A_3 = (1-r)^p$. When $r =1$, the inequality is an equality.
        For fixed $r <1$, we define 
        \[
            F(s) = A_1 (1-s)^p + A_2 s^p + A_3 - (1-rs)^p.
        \]
        It suffices to prove that $F(s) \geq 0$. We first note that $F(0) = F(1) = 0$.
        We compute for $s \in (0,1)$ that
        \begin{align}
            F'(s) &= -pA_1 (1-s)^{p-1} +pA_2s^{p-1} + pr(1-rs)^{p-1} 
            \\
            &= p(1-s)^{p-1} \left( -A_1 +A_2\left(\frac{s}{1-s}\right)^{p-1}+ r \left(\frac{1-rs}{1-s}\right)^{p-1} \right) .
        \end{align}
        Notice that the sign of $F'(s)$ is the same as the sign of 
        $$
            H(s) := -A_1 +A_2\left(\frac{s}{1-s}\right)^{p-1}+ r \left(\frac{1-rs}{1-s}\right)^{p-1} = -A_1 +A_2\left(\frac{1-s}{s}\right)^{1-p}+ r \left(\frac{1-s}{1-rs}\right)^{1-p}. 
        $$ 
        Observe that
        \[
            s \longmapsto \frac{1-s}{s} \quad \text{and} \quad s \longmapsto \frac{1-s}{1-r s}
        \]
        are non-increasing functions on $(0,1)$. Since $p <1$, we get that $H$ is a non-increasing function. Consequently, $H$ can change sign at most once, and only from nonnegative to non-positive. Since $F^{\prime}$ has the same sign as $H$, the function $F$ is first non-decreasing and then non-increasing, where either interval may be empty. Therefore, the minimum of $F$ on $[0,1]$ is attained at an endpoint. Hence, for every $s \in[0,1]$,
$$
F(s) \geq \min \{F(0), F(1)\}=0.
$$
        \item[\bf Case 3:]
        Suppose that $b_1 = 1, b_2 =s ,b_3 = 0$. The left-hand side of \eqref{eq:desired-ineq-12-claim1} is 
        \begin{align}
            \left|\operatorname{det}\left(\begin{array}{ccc}1 & 1 & 1 \\ 0 & r & 1 \\ 1 & s & 0\end{array}\right)\right|^p   
            &= |r-(1-s)|^p.
        \end{align}
        For the right-hand side of \eqref{eq:desired-ineq-12-claim1}, a direct computation gives 
        \[
             \sum_{i \neq j} A_i B_j = A_1 s^p + A_2 + A_3(1-s)^p .
        \]
        This is exactly the expression from the first case, replacing $s$ by $1-s$.
        \item[\bf Case 4:] 
        Suppose that $b_1 = 1, b_2 =0 ,b_3 = s$. The left-hand side of \eqref{eq:desired-ineq-12-claim1} is 
        \begin{align}
            \left|\operatorname{det}\left(\begin{array}{ccc}1 & 1 & 1 \\ 0 & r & 1 \\ 1 & 0 & s\end{array}\right)\right|^p   
            &= (1-r(1-s))^p.
        \end{align}
        For the right-hand side of \eqref{eq:desired-ineq-12-claim1}, a direct computation gives 
        \[
             \sum_{i \neq j} A_i B_j = A_1 s^p + A_2(1-s)^p +A_3.
        \]
        This is exactly the expression from the second case, replacing $s$ by $1-s$.
    \end{enumerate}
    The remaining two cases are equivalent to the second and fourth cases by interchanging the indices 1 and 3. Hence, we have established the claim.

    Now, we show that the right-hand side of \eqref{eq:desired-ineq-12} is the same as the right-hand side of the claim \eqref{eq:desired-ineq-12-claim1}.
    We recall the representation in Lemma \ref{lem:representation-p-01}, for any $ a,b \in \R$,
    \begin{equation} \label{eq:representation-p-01}
        |a-b|^p=\int \left|{1}_{[s, s+t]}(a)-{1}_{[s, s+t]}(b)\right| d \mu(s,t).
    \end{equation}
    To lighten the notation, we denote 
    \begin{center}
        $c_i (s,t) = {1}_{[s, s+t]}(a_i)$ \quad and \quad  $d_i(s,t) = {1}_{[s, s+t]}(b_i)$\qquad for $ 1\leq i \leq 3$.
    \end{center} 
    Using \eqref{eq:representation-p-01}, we obtain that 
    \begin{align}
        A_i = \frac{|a_i - a_j|^p + |a_i - a_k|^p - |a_k - a_j|^p}{2} =
        \int C_i(s,t)  d \mu(s,t), \qquad  1\leq i \leq 3,
    \end{align}
    where 
    \[
        C_i(s,t) :=  \frac{|c_i (s,t)- c_j(s,t)| + |c_i (s,t)- c_k(s,t)| - |c_j(s,t) - c_k(s,t)|}{2}.
    \]
    Here, $j,k$ are two distinct  indices from $\{1,2,3\} \setminus \{i\}$.
    Similarly, we have
    \begin{align}
        B_i =\int D_i(\sigma, \tau)  d \mu(\sigma,\tau), \qquad  1\leq i \leq 3,
    \end{align}
    where $D_i$ is defined in the same way as $C_i$, with $c_l$ replaced by $d_l$ for each $1 \leq l\leq 3$.
    Hence,
    \begin{equation} \label{eq:claim-001}
        \sum_{i \neq j} A_i B_j  =  \iint \sum_{i \neq j} C_i (s, t) D_j(\sigma, \tau) \, d \mu(s,t)   d \mu(\sigma,\tau).
    \end{equation}
    It follows from Lemma \ref{lem:desired-ineq-12-01} that for any $(s,t) ,(\sigma,\tau) \in \R\times (0,\infty)$,
    \begin{equation} \label{eq:claim-p-01}
        \sum_{i \neq j} C_i(s, t) D_j(\sigma, \tau)
        = 
        \left|\operatorname{det}\left(\begin{array}{ccc}1 & 1 & 1 \\ c_1(s,t) & c_2(s,t) & c_3(s,t) \\ d_1(\sigma, \tau) & d_2(\sigma, \tau) & d_3(\sigma, \tau)\end{array}\right)\right|.
    \end{equation}
    Inserting it into \eqref{eq:claim-001}, we complete the proof. \qedhere
\end{proof}

\begin{proof}[Proof of the positive part of Theorem \ref{thm:determinant-ineq-weak}]
    By applying an orthogonal transformation, we may assume that $v = e_3$. 
    We prove that adding one vector does not decrease the ratio, that is,
    \begin{equation} \label{eq:main-goal}
        \frac{\displaystyle\sum_{1 \leq i< j < k \leq N}\left|\operatorname{det}\left(u_i,u_j,u_k\right)\right|^p}{\displaystyle\sum_{1 \leq i< j  \leq N}\left|\operatorname{det}\left(u_i,u_j,e_3\right)\right|^p} 
        \leq 
        \frac{\displaystyle\sum_{1 \leq i< j < k \leq N+1}\left|\operatorname{det}\left(u_i,u_j,u_k\right)\right|^p}{\displaystyle\sum_{1 \leq i< j  \leq N+1}\left|\operatorname{det}\left(u_i,u_j,e_3\right)\right|^p}.
    \end{equation}
    If $u_{N+1} $ is parallel to $e_3$, then the denominator of the right-hand side is the same as the denominator on the left-hand side, while the numerator on the right-hand side contains additional terms. Hence, \eqref{eq:main-goal} follows. We therefore assume that $u_{N+1}$ and $e_3$ are linearly independent. Applying an invertible linear map that sends  $u_{N+1}$ to $e_2$ and fixes $e_3 $ and using cross multiplication, it suffices to prove 
    \begin{align}
        \label{eq:w-det-p-01-pf}
         &\left(\sum_{1\leq i \leq N}\left|\operatorname{det}\left(u_i, e_2, e_3\right)\right|^p\right)\left(\sum_{1\leq i<j<k\leq N}\left|\operatorname{det}\left(u_i, u_j, u_k\right)\right|^p\right)
         \\
         &\qquad \leq
         \left(\sum_{1\leq i<j\leq N}\left|\operatorname{det}\left(u_i, u_j, e_3\right)\right|^p\right)\left(\sum_{1\leq i<j \leq N}\left|\operatorname{det}\left(u_i, u_j, e_2\right)\right|^p\right) .
    \end{align}
    In what follows, all indices in the summation are assumed to range from $1$ to $N$.
    Note that the case $p =1$ was proved in \cite[Theorem 5.1]{FMMZ-24}. Now, we assume that $p <1$.
    For any $1\leq i \leq N$, write $ u_i = (x_i,y_i,z_i)$. Let us assume first that $x_i \neq 0$ for all $i$. Set
    \begin{center}
        $a_i = \frac{y_i}{x_i}$, \quad  $b_i = \frac{z_i}{x_i}$,
        \quad and \quad $w_i = |x_i|^p > 0$.
    \end{center}
    Hence, \eqref{eq:w-det-p-01-pf} reduces to 
    \begin{equation}
    \begin{split}
    &\left(\sum_{i}w_i\right) 
    \left(\sum_{i<j<k} w_iw_jw_k \left|\operatorname{det}\left(\begin{array}{ccc} 1 & 1 & 1 \\ a_i & a_j & a_k \\ b_i & b_j & b_k\end{array}\right)\right|^p\right)
    \\
    &\qquad\leq
    \left(\sum_{ i<j} w_iw_j \left|a_i-a_j\right|^p\right) \left(\sum_{i<j} w_iw_j \left|b_i-b_j\right|^p\right).
    \end{split} \label{eq:claim-002}
    \end{equation}
    Define a measure $\mu$ on $\R \times (0,\infty)$ by 
    $$
        d \mu (s,t) = \frac{p(1-p)}{2} d s \, t^{p-2} d t.
    $$ 
    Using Lemma \ref{lem:desired-ineq-12}, we obtain that
     \begin{equation} 
        \begin{split}
        &\left|\operatorname{det}\left(\begin{array}{ccc}1 & 1 & 1 \\ a_i & a_j & a_k \\ b_i & b_j & b_k\end{array}\right)\right|^p  
        \\
        &\qquad \leq
        \iint\left|\operatorname{det}\left(\begin{array}{ccc}1 & 1 & 1 \\ 1_{[s,s+t]}(a_i) & 1_{[s,s+t]}(a_j) & 1_{[s,s+t]}(a_k) \\ 1_{[\sigma,\sigma+\tau]}(b_i) & 1_{[\sigma,\sigma+\tau]}(b_j) & 1_{[\sigma,\sigma+\tau]}(b_k)
        \end{array}\right)\right| d \mu(s, t) d \mu(\sigma, \tau).
        \end{split}
    \end{equation}
    Inserting into the left-hand side of \eqref{eq:claim-002}, we simplify to get that
    \begin{align}
        &\left(\sum_{i }w_i\right)  \sum_{i<j<k} w_iw_jw_k \left|\operatorname{det}\left(\begin{array}{ccc} 1 & 1 & 1 \\ a_i & a_j & a_k \\ b_i & b_j & b_k\end{array}\right)\right|^p
        \\
        &\qquad \leq
        \iint
        \left(\sum_{i}w_i\right)  \sum_{i<j<k}  \left|\operatorname{det}\left(\begin{array}{ccc}w_i & w_j & w_k \\ c_i  & c_j  & c_k  \\ d_i & d_j & d_k
        \end{array}\right)\right| d \mu(s, t) d \mu(\sigma, \tau),
    \end{align}
    where for any $i$, $c_i = w_i \cdot 1_{[s,s+t]}(a_i) $ and $d_i = w_i \cdot 1_{[\sigma,\sigma+\tau]}(b_i)$. Applying \eqref{eq:w-det-p-01-pf} for $p =1$  to the integrand gives
    \begin{align}
        &\left(\sum_{i }w_i\right)  \sum_{i<j<k} w_iw_jw_k \left|\operatorname{det}\left(\begin{array}{ccc} 1 & 1 & 1 \\ a_i & a_j & a_k \\ b_i & b_j & b_k\end{array}\right)\right|^p
        \\
        &\qquad \leq \iint \left(\sum_{i<j}\left|\operatorname{det}\left(\begin{array}{cc}w_i & w_j  \\ c_i & c_j 
        \end{array}\right)\right|\right)\left(\sum_{i<j }\left|\operatorname{det}\left(\begin{array}{cc}w_i & w_j \\  d_i & d_j
        \end{array}\right)\right|\right) d \mu(s, t) d \mu(\sigma, \tau)
        \\
        &\qquad = \left(\sum_{i<j} \int  \left|\operatorname{det}\left(\begin{array}{cc}w_i & w_j  \\ c_i & c_j 
        \end{array}\right)\right| d \mu(s, t)\right)\left(\sum_{i<j }\int \left|\operatorname{det}\left(\begin{array}{cc}w_i & w_j \\  d_i & d_j
        \end{array}\right)\right|d \mu(\sigma, \tau)\right) ,
    \end{align}
    where we used Fubini's theorem in the last step. Using Lemma  \ref{lem:representation-p-01}, we get that 
    \begin{align}
        \sum_{i<j}  \int  \left|\operatorname{det}\left(\begin{array}{cc}w_i & w_j  \\ c_i & c_j 
        \end{array}\right)\right| d \mu(s, t)
        &= \sum_{i<j} w_iw_j \int \left|1_{[s,s+t]}(a_i) -1_{[s,s+t]}(a_j) \right| d \mu(s, t) 
        \\
        &=\sum_{i<j}  w_iw_j | a_i -a_j|^p.
    \end{align}
    The same argument applies to the second factor. Hence, we obtain \eqref{eq:claim-002}, which is equivalent to \eqref{eq:w-det-p-01-pf}.

    The general case follows by continuity. Indeed, replace
\(u_i=(x_i,y_i,z_i)\) by
\[
    u_i^{(\varepsilon)}=(x_i+\varepsilon,y_i,z_i),
\]
where \(\varepsilon\to0\) through values for which
\(x_i+\varepsilon\ne0\) for every \(i\). Applying the preceding
inequality to \(u_i^{(\varepsilon)}\) and passing to the limit proves
the general case.
\end{proof}

\appendix
\section{The duality-map estimate}\label{app:duality-map}
For the reader's convenience, we recall the statement of
Lemma~\ref{lem:mazur-map-section}. Let $1<p\leq2$, and let $q$ be the
H\"older conjugate of $p$, so that
$
\frac{1}{p}+\frac{1}{q}=1.
$
For a nonzero vector $u\in\R^m$, define
$$
J(u)=
\frac{(|u_1|^{p-2}u_1,\ldots,|u_m|^{p-2}u_m)}
{\|u\|_p^{p-1}}.
$$
Then, for all nonzero $u,v\in\R^m$,
$$
\|J(u)-J(v)\|_q
\leq
2\left(
\frac{\|u-v\|_p}{\|u\|_p+\|v\|_p}
\right)^{p-1}.
$$
\begin{proof}[Proof of Lemma~\ref{lem:mazur-map-section}] 
Set $\phi(t)=|t|^{p-2}t,$ for $ t\in\R\setminus\{0\},$ and $\phi(0)=0.$ We first prove the following scalar inequality. If
$\lambda,\mu\geq0$, $\lambda+\mu=1$, and $s,t\in\R$, then
\begin{equation}
\label{eq:scalar-mazur-map-section-claim}
    |\phi(s)-\phi(t)|^q
    \leq
    2^q|\lambda s-\mu t|^p
    +
    2^{q-1}(\mu-\lambda)
    \bigl(|s|^p-|t|^p\bigr).
\end{equation}
The inequality is invariant under the simultaneous interchange $(s,\lambda)\leftrightarrow(t,\mu)$. Thus, we may assume that $|s| \geq |t|$. By continuity, it suffices to consider the case $
|s|\geq |t|>0$. The inequality is also unchanged if $(s,t)$ is replaced by $(-s,-t)$, so we may assume that $t > 0$. Since the inequality is homogeneous of degree $p$, we divide by $t$ and reduce to the case $t=1$. Hence, by substituting $\mu = 1-\lambda$, it remains to prove for $s \in \R$ with $|s| \geq 1$, and $\lambda \in [0,1]$,
\begin{equation}
\label{eq:scalar-mazur-map-section}
    |\phi(s)-1|^q
    \leq
    2^q|\lambda (s+1)-1 |^p
    +
    2^{q-1}(1-2\lambda)
    \bigl(|s|^p-1\bigr).
\end{equation}
\begin{enumerate}[wide =0pt, labelwidth= 1.1cm]
    \item[\bf Case 1:] Suppose that $s \geq 1$. If $s = 1$, then inequality \eqref{eq:scalar-mazur-map-section} is immediate since the left-hand side is zero. Hence, we assume that $s > 1$. Fix $s$. We minimize the right-hand side of \eqref{eq:scalar-mazur-map-section} with respect to $\lambda$. Define a function $F$ on $[0,1]$ by 
    \[
        F(\lambda) = 2^q|\lambda(s+1)-1|^p+2^{q-1}(1-2 \lambda)\left(s^p-1\right).
    \]
    Differentiating $F$ we obtain 
    \[
        F^{\prime}(\lambda)=2^q p(s+1) \operatorname{sgn}(\lambda(s+1)-1)|\lambda(s+1)-1|^{p-1}-2^q\left(s^p-1\right).
    \]
    The critical-point equation is 
    $$
        p(s+1)\operatorname{sgn}(\lambda(s+1)-1)|\lambda(s+1)-1|^{p-1}=s^p-1 .
    $$
    Since $ s > 1$, the right-hand side is positive. Therefore,  any solution must satisfy $\lambda > \frac{1}{s+1}$. Solving the equation, the critical point is
    \[
        \lambda_0 =\frac{1+\left(\frac{s^p-1}{p(s+1)}\right)^{1 /(p-1)}}{s+1}.
    \]
    Since $1< p \leq 2$, the function $x \mapsto x^{p-1}$ is concave. Using Jensen's inequality, we have 
    \begin{equation}
    \label{eq:Jen-with-s}
        \frac{s^p-1}{p(s-1)} =\frac{1}{s-1}\int_1^s t^{p-1} dt \leq \left(\frac{1}{s-1} \int_1^s t d t \right)^{p-1} = \left(\frac{s+1}{2}\right)^{p-1}.
    \end{equation}
    Hence,
    \begin{equation} 
    \frac{s^p-1}{p(s+1)}
\leq
    \frac{s-1}{s+1}
    \left(\frac{s+1}{2}\right)^{p-1}
    \leq
    \left(\frac{s-1}{2}\right)^{p-1}. \label{eq:bound-of-F}
    \end{equation}
    Therefore, $\lambda_0 \leq \frac{1}{2}$. Since $F$ is strictly convex, $\lambda_0$ is the only minimizer of $F$. 
    Substituting $\lambda_0$ into $F$, we obtain 
    \begin{align}
        F\left(\lambda_0\right)&=2^q  \left(\frac{s^p-1}{p(s+1)}\right)^{p /(p-1)} 
         +2^{q-1}\left(1-\frac{2+2\left(\frac{s^p-1}{p(s+1)}\right)^{1 /(p-1)}}{s+1}\right)\left(s^p-1\right)
         \\ & 
         =\frac{2^{q-1}\left(s^p-1\right)}{s+1}\left[s-1-\frac{2(p-1)}{p}\left(\frac{s^p-1}{p(s+1)}\right)^{1 /(p-1)}\right] .
    \end{align}
    Using \eqref{eq:bound-of-F}, we obtain
    \[
        F(\lambda_0) \geq \frac{2^{q-1}\left(s^p-1\right)}{s+1} \cdot \frac{s-1}{p}.
    \]
    To verify \eqref{eq:scalar-mazur-map-section}, it is enough to prove
    \begin{equation} \label{eq:to-prove-final-s}
        |s^{p-1} -1|^q \leq \frac{2^{q-1}\left(s^p-1\right)(s-1)}{p(s+1)}, \qquad s> 1.
    \end{equation}
    Define $G$ on $[1,\infty)$ by
    \[
        G(s)=
        \frac{
            2^{q-1}(s^p-1)(s-1)
        }{
            p(s+1)(s^{p-1}-1)^q
        }.
    \]
    Taking logarithms on both sides and then differentiating, we obtain
    \[
        \frac{G'(s)}{G(s)}
   =
    \frac{p s^{p-1}}{s^p-1}
    +
    \frac{1}{s-1}
    -
    \frac{1}{s+1}
    -
    \frac{p s^{p-2}}{s^{p-1}-1}
=
    -\frac{
        p s^{p-2}(s-1)
    }{
        (s^p-1)(s^{p-1}-1)
    }
    +
    \frac{2}{s^2-1}.
    \]
    Claim that $G' (s) \leq 0$. Equivalently, 
    \[
        2 \left( \frac{(s^p -1)(s^{p-1} -1)}{p(s-1)^2} \right) 
        \leq s^{p-2} (s+1).
    \]
    Using \eqref{eq:Jen-with-s} and the Hermite--Hadamard inequality with the convex function $t \mapsto t^{p-2}$ on $ [1,s]$, we obtain
    \begin{align}
        2 \left( \frac{(s^p -1)(s^{p-1} -1)}{p(s-1)^2} \right) &=2 (p-1)\left( \frac{s^p -1}{p(s-1)} \right) \left( \frac{s^{p-1} -1}{(p-1)(s-1)} \right)
        \\
        &\leq 
        2(p-1) \left(\frac{s+1}{2}\right)^{p-1}\left( \frac{1}{s-1} \int_{1}^s t^{p-2} dt \right)
        \\
        &\leq 
        2(p-1) \left(\frac{s+1}{2}\right)^{p-1} \left( \frac{s^{p-2}+1}{2} \right)
        \\
        &\leq 2^{1-p}s^{p-2}(s+1)^{p-1} (1 + s^{2-p}). \label{eq:final-first-case}
    \end{align}
    Using H\"older's inequality, we obtain
    \[
        1 + s^{2-p} \leq  2^{p-1}(1+s)^{2-p}.
    \]
    Plugging it into \eqref{eq:final-first-case}, we verify that $G'(s) \leq 0$ for $s >1$. Moreover, since $ q \geq 2$ and $p \leq 2$, we get
\[
    \lim_{s\to\infty}G(s)
    =
    \frac{2^{q-1}}{p} \geq 1.
\]
Thus, $G(s)\geq1$ for $s >1$, proving \eqref{eq:to-prove-final-s} and hence
\eqref{eq:scalar-mazur-map-section} in the case $s\geq1$.
\item[\bf Case 2:] Suppose that $s \leq -1$. If $s = -1$, then both sides of inequality \eqref{eq:scalar-mazur-map-section} are equal. Thus, we only need to prove when $s < -1$. To avoid confusion caused by the sign of $s$, we write $s=-r$ where $ r > 1$. Thus, we need to prove
\begin{equation}
\label{eq:scalar-mazur-map-section-2}
    (r^{p-1}+1)^q
    \leq
    2^q(1+ \lambda (r-1))^p
    +
    2^{q-1}(1-2\lambda)
    \bigl(r^p-1\bigr).
\end{equation}
Fix $r$, and define $F$ on $[0,1]$ by
$$
F(\lambda)=2^q(1+\lambda(r-1))^p+2^{q-1}(1-2 \lambda)\left(r^p-1\right) .
$$
Differentiating, we obtain
$$
F^{\prime}(\lambda)=2^q p(r-1)(1+\lambda(r-1))^{p-1}-2^q\left(r^p-1\right) .
$$
The critical-point equation is
$
p(r-1)(1+\lambda(r-1))^{p-1}=r^p-1 .
$
Therefore, the critical point is
$$
\lambda_0=\frac{\left(\frac{r^p-1}{p(r-1)}\right)^{1 /(p-1)}-1}{r-1} .
$$
Since $t^{p-1} \geq 1$ on $[1, r]$,
$$
\frac{r^p-1}{p(r-1)}=\frac{1}{r-1} \int_1^r t^{p-1} d t \geq 1.
$$
Hence,
$
\lambda_0 \geq 0 .
$
Moreover, since $1<p \leq 2$, the function $t \mapsto t^{p-1}$ is concave. By Jensen's inequality,
\begin{equation} \label{eq:second-case-claim-001}
\frac{r^p-1}{p(r-1)} \leq \left(\frac{r+1}{2}\right)^{p-1}.
\end{equation}
Thus, $\lambda_0 \leq \frac{1}{2}$. Since $F$ is strictly convex, $\lambda_0$ is its unique minimizer. Substituting $\lambda_0$ into $F$, we obtain
\[
    F(\lambda_0 ) = 2^{q-1} \frac{r^p-1}{p(r-1)}\left[p(r+1)-2(p-1)\left(\frac{r^p-1}{p(r-1)}\right)^{1 /(p-1)}\right].
\]
Using \eqref{eq:second-case-claim-001}, we obtain
$$
F\left(\lambda_0\right) \geq 2^{q-1} \frac{r^p-1}{p(r-1)}(r+1) .
$$
We use the Hermite--Hadamard inequality. Since $t \mapsto t^{p-1}$ is concave,
$$
\frac{r^p-1}{p(r-1)}=\frac{1}{r-1} \int_1^r t^{p-1} d t \geq \frac{1+r^{p-1}}{2}.
$$
It follows that
$
F\left(\lambda_0\right) \geq 2^{q-2}\left(1+r^{p-1}\right)(r+1) .
$
To prove \eqref{eq:scalar-mazur-map-section-2}, it remains to check
\begin{equation} \label{eq:final-case-2}
\left(r^{p-1}+1\right)^q \leq 2^{q-2}\left(1+r^{p-1}\right)(r+1).
\end{equation}
By H\"older's inequality, we have
$
1+r^{p-1} \leq 2^{2-p}(1+r)^{p-1} .
$
Thus,
$$
    \left(1+r^{p-1}\right)^{q-1}  \leq 2^{(2-p)(q-1)}(r+1)^{(p-1)(q-1)}  =2^{q-2}(r+1) ,
    $$
    proving \eqref{eq:final-case-2}, hence \eqref{eq:scalar-mazur-map-section-2}.
\end{enumerate}

    Combining these two cases proves \eqref{eq:scalar-mazur-map-section-claim}. 
    Write
    \[
    J(u)=\left(\phi\left(\frac{u_1}{\|u\|_p}\right), \ldots, \phi\left(\frac{u_m}{\|u\|_p}\right)\right),
    \quad \text{and}\quad
    J(v)=\left(\phi\left(\frac{v_1}{\|v\|_p}\right), \ldots, \phi\left(\frac{v_m}{\|v\|_p}\right)\right).
\]
Applying \eqref{eq:scalar-mazur-map-section-claim} with
$s=\frac{u_i}{\|u\|_p}$, $t=\frac{v_i}{\|v\|_p}$ and $\lambda = \frac{\|u\|_p}{\|u\|_p + \|v\|_p}$, we obtain
\[
\left|\phi\left(\frac{u_i}{\|u\|_p}\right)-\phi\left(\frac{v_i}{\|v\|_p}\right)\right|^q
    \leq
    2^q\frac{|u_i -v_i|^p}{(\|u\|_p + \|v\|_p)^p}
    +
    2^{q-1}\left(\frac{\|v\|_p - \|u\|_p}{\|u\|_p + \|v\|_p}\right)
    \left(\frac{\left|u_i\right|^p}{\|u\|_p^p}-\frac{\left|v_i\right|^p}{\|v\|_p^p}\right).
\]
Summing over $i$, we have
\[
    \|J(u)-J(v)\|_q^q
    \leq
    2^q
    \left(
        \frac{\|u-v\|_p}
        {\|u\|_p+\|v\|_p}
    \right)^p,
\]
where the last term vanishes. Therefore, raising both sides to the power $1 / q$ completes the proof.
\end{proof}

\section*{Acknowledgments and AI assistance disclosure}

The authors thank LAMA at Université Gustave Eiffel and the Institut Mathématique de Jussieu for their hospitality. 

A. M. and A. Z. are supported in part by the U.S. National Science Foundation Grant DMS-2247771 and the United States--Israel Binational Science Foundation (BSF) Grant 2018115.

A. M. was supported by the Chateaubriand Fellowship of the Office for Science \& Technology of the Embassy of France in the United States.

During the preparation of this paper, OpenAI's GPT-5.6 Sol was used as an auxiliary tool to explore examples, test determinant computations, and assist with preliminary proof development. The mathematical arguments, final statements, and computations presented here are those of the authors, who have verified them and take full responsibility for the content of the paper.


\bigskip
\noindent Matthieu Fradelizi
\\
Univ Gustave Eiffel, Univ Paris Est Creteil, CNRS, LAMA UMR8050, F-77447 Marne-la-Vall\'ee, France.
\\
E-mail address: matthieu.fradelizi@univ-eiffel.fr
\vspace{2mm}
\\
\noindent Auttawich Manui 
\\
Department of Mathematics, Syracuse University, Syracuse, NY 13244, USA
\\
E-mail address: amanui@syr.edu
\vspace{2mm}
\\
\noindent Cheikh Saliou Ndiaye
\\
Univ Gustave Eiffel, Univ Paris Est Creteil, CNRS, LAMA UMR8050, F-77447 Marne-la-Vall\'ee, France.
\\
E-mail address: cheikh-saliou.ndiaye@univ-eiffel.fr
\vspace{2mm}
\\
\noindent Artem Zvavitch
\\
Department of Mathematical Sciences, Kent State University, Kent, OH 44242, USA. 
\\ E-mail address: zvavitch@math.kent.edu

\end{document}